\newtheorem{theorem}{Theorem}[section] 
\newtheorem{lemma}[theorem]{Lemma}     
\newtheorem{corollary}[theorem]{Corollary}
\newtheorem{proposition}[theorem]{Proposition}
\newtheorem{remark}[theorem]{Remark}
\newtheorem{definition}[theorem]{Definition}
\newcommand{\decomp}{\mathcal{D}}
\newcommand{\decompX}{\{X_i\}_{i=0}^{\infty}}
\newcommand{\decompY}{\{Y_i\}_{i=0}^{\infty}}
\newcommand{\LS}{L}
\newcommand{\RS}{R}
\newcommand{\LSo}{L_{\mathcal{D}_1}}
\newcommand{\RSo}{R_{\mathcal{D}_1}}
\newcommand{\LSt}{L_{\mathcal{D}_2}}
\newcommand{\RSt}{R_{\mathcal{D}_2}}
\newcommand{\X}{\mathcal{X}}
\newcommand{\Y}{\mathcal{Y}}
\newcommand{\Z}{\mathcal{Z}}
\newcommand{\opX}{\mathcal{L}(\X)}
\newcommand{\idI}{\mathcal{I}(X)}
\newcommand{\AD}{\mathcal{A}}
\newcommand{\TD}{T_{\mathcal{A}}}
\newcommand{\N}{\mathbb{N}}
\newcommand{\CN}{\mathbb{C}}
\newcommand{\LCN}{\lambda\in\mathbb{C}}
\newcommand{\linf}{\ell_{\infty}}
\newcommand{\oplinf}{\mathcal{L}(\ell_{\infty})}
\newcommand{\ssop}[1]{\mathcal{S}(#1)}
\newcommand{\sso}{\mathcal{S}(\ell_{\infty})}
\newcommand{\comp}[1]{\mathcal{K}(#1)}
\newcommand{\Trest}[1] {\lambda_{#1}I_{|#1} + K_{#1}}
\newcommand{\seq}[1] {\displaystyle \{#1_i\}_{i=1}^{\infty}}
\newcommand{\seqo}[1] {\displaystyle \{#1_i\}_{i=0}^{\infty}}
\newcommand{\sumspace}{\big (\sum_{i=0}^{\infty} Y_i\big )_{p}}
\newcommand{\sumY}{\big (\sum Y\big )_p}
\newcommand{\sumYI}{\big (\sum Y\big )_\infty}
\newcommand{\sumX}{\big (\sum \X\big )_p}
\newcommand{\opsumY}{\mathcal{L}\big (\big (\sum Y\big )_p\big )}
\newcommand{\les}{\sigma_{l.e.}(T)}
\newcommand{\IX}{\mathcal{M}_\X}
\newcommand{\propP}{{\textbf{P}}}
\begin{document}
\begin{center}{\Huge Commutators on $\ell_{\infty}$}
\end{center}

\begin{center}{\Large D. Dosev  \footnote{Research supported in part by NSF grant DMS-0503688}, W. B. Johnson \footnotemark[\value{footnote}] }
\end{center}


\begin{abstract}
The operators on $\linf$ which are commutators are those not of the form $\lambda I + S$ with 
$\lambda\neq 0$ and $S$ strictly singular.
\end{abstract}
\section{Introduction}\label{intro}
The commutator of two elements $A$ and $B$ in a Banach algebra is given by
$$
[A,B] = AB - BA.
$$
A natural problem that arises in the study of  derivations on a Banach algebra $\X$ is to classify the commutators in the algebra.
Using a result of Wintner(\cite{Wintner}), who proved that the identity in a unital Banach algebra is not a commutator, with no effort one can also 
show that no operator of the form  $\lambda I + K$, where $K$  belongs to a norm closed ideal $\idI$ of $\opX$ and $\lambda\neq 0$, 
is a commutator in the Banach algebra $\opX$ of all bounded linear operators on the Banach space $\X$. 
The latter fact can be easily seen just by observing that the  quotient algebra $\opX / \idI$ also satisfies  the conditions of Wintner's theorem. 

\noindent
In 1965 Brown and Pearcy (\cite{BrownPearcy}) made a breakthrough by proving that the only operators on $\ell_2$ that are not commutators are the ones of the form $\lambda I + K$, where $K$  is compact and $\lambda\neq 0$.  Their result suggests what the classification on the other classical sequence spaces might be, and, in 1972, Apostol (\cite{Apostol_lp}) proved that every non-commutator on the space $\ell_p$ for $1<p<\infty$ is of the form 
$\lambda I + K$, where $K$ is compact and $\lambda\neq 0$. One year later he proved that the same classification holds in the case of $\X = c_0$  (\cite{Apostol_c0}). Apostol proved some partial results on $\ell_1$, but only 30 year later was the same classification proved for $\X = \ell_1$ by the first author (\cite{Dosev}).
Note that if $\X = \ell_p$ ($1\leq p<\infty$) or $\X = c_0$, the ideal of compact operators $K(\X)$ is the largest proper ideal  in $\mathcal{L}(\X)$ (\cite{GM}, see also \cite[Theorem 6.2]{Whitley}). The classification of the commutators on $\ell_p$, $1\leq p<\infty$, and partial results on other spaces suggest the following 

\noindent
\textbf{Conjecture.}
Let $\X$ be a Banach space such that $\X\simeq \sumX$, $1\leq p\leq\infty$ or $p=0$
(we say that such a space admits a Pe\l czy\'nski decomposition). Assume that $\opX$ has a largest ideal
$\mathcal{M}$. Then every non-commutator on $\X$ has the form $\lambda I + K$, where $K\in \mathcal{M}$  and $\lambda\neq 0$.

\noindent
In \cite{Apostol_lp} Apostol obtained a partial result regarding the commutators on $\linf$. He proved that if  $T\in\oplinf$  and there exists a sequence of projections $(P_n)_{n=1}^{\infty}$ on $\linf$ such that $P_n(\linf)\simeq\linf$ for $n = 1,2,\ldots$ and $\|P_nT\|\to 0$ as $n\to\infty$, then $T$ is a commutator. This condition is clearly satisfied if $T$ is a compact operator, but, as the first author showed in \cite{Dosev}, it is also satisfied if $T$ is strictly singular, which is an essential step for proving the conjecture for $\linf$.

\noindent
In order to give a positive answer to the conjecture one has to prove

\begin{itemize}
\item Every operator $T\in\mathcal{M}$ is a commutator
\item If $T\in\opX$ is not of the form $\lambda I + K$, where $K\in\mathcal{M}$ and $\lambda\neq 0$, then $T$ is
a commutator.
\end{itemize}
In this paper we will give positive answer to this conjecture for the space $\linf$.
\section{Notation and basic results} 
For a Banach space $\X$ denote by the $\opX$, $\comp{\X}$, $\mathcal{C}(\X)$ and  $S_{\X}$ the space of all bounded linear operators, the ideal of compact operators, the set of all finite co-dimensional subspaces of $\X$ and the unit sphere of $\X$. By {\it ideal} we always mean closed, non-zero, proper ideal.
A map from a Banach space $\X$ to a Banach space $\Y$ is said to be strictly singular if whenever
the restriction of $T$ to a subspace $M$ of $X$ has a continuous inverse, $M$ is finite
dimensional. In the case where $\X\equiv\Y$, the set of strictly singular operators forms an ideal which we will denote by $\ssop{\X}$. Recall that for $\X = \ell_p$, $1\leq p < \infty$, $\ssop{\X} = \comp{X}$ (\cite{GM}) and on $\linf$ the ideals of strictly singular and weakly compact operators coincide (\cite[Theorem 5.5.1]{Kalton}).
 A Banach space $\X$ is called {\it{prime}} if each infinite-dimensional complemented subspace of $\X$ is isomorphic to $\X$. The spaces $\ell_p$, $1\leq p\leq\infty$, are all prime (cf. \cite[Theorem 2.a.3 and Theorem 2.a.7]{LT}). For any two subspaces (possibly not closed) $\X$ and $\Y$  of a Banach space $\Z$ let
$$
d(\X,\Y) = \inf \{\|x-y\| : x\in S_{\X},\, y\in \Y\}.
$$ 
A well known consequence of the open mapping theorem is that for any two closed subspaces $\X$ and $\Y$ of $\Z$, if $\X\cap\Y = \{0\}$ then $\X+\Y$ is a closed subspace of $\Z$ if and only if $d(\X,\Y)>0$. 
Note also that $d(\X,\Y) = 0$ if and only if $d(\Y,\X) = 0$. First we  prove a proposition that will later allow us to consider translations of an operator $T$ by a multiple of the identity instead of the operator $T$ itself. 

\begin{proposition}\label{distanceprop}
Let $\X$ be a Banach space and $T\in\opX$ be  such that there exists a 
subspace $Y\subset\X$ for which $T$ is an isomorphism on $Y$ and $d(Y,TY)>0$. Then for every $\LCN$, $(T-\lambda I)_{|Y}$ is an isomorphism and $d(Y,(T-\lambda I)Y)>0$.
\end{proposition}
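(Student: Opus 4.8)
The plan is to exploit the fact that $Y$ is a linear subspace, so that $\lambda z\in Y$ and $y+\lambda z\in Y$ whenever $y,z\in Y$; this lets me absorb the term $\lambda I$ into $Y$ and reduce both assertions to the single separation hypothesis $d(Y,TY)>0$ together with the fact that $T$ is bounded below on $Y$, say $\|Tw\|\ge c\|w\|$ for all $w\in Y$ with some $c>0$. Both parts will be proved by contradiction, the crux each time being to normalize a suitable sequence of vectors of $Y$ and of $TY$ so as to contradict $d(Y,TY)>0$. Throughout I use that for any $v\in Y\setminus\{0\}$ the vector $v/\|v\|$ lies in $S_Y$, and that any scalar multiple of a vector of $TY$ again lies in $TY$.

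For the first conclusion, that $(T-\lambda I)_{|Y}$ is an isomorphism, the case $\lambda=0$ is exactly the hypothesis, so assume $\lambda\neq 0$. If $(T-\lambda I)_{|Y}$ were not bounded below there would be $y_n\in S_Y$ with $\|Ty_n-\lambda y_n\|\to 0$. Dividing by $|\lambda|$ and noting that $\tfrac{\lambda}{|\lambda|}y_n\in S_Y$ while $\tfrac{1}{|\lambda|}Ty_n\in TY$, I obtain $d(Y,TY)\le \tfrac{1}{|\lambda|}\|\lambda y_n-Ty_n\|\to 0$, contradicting $d(Y,TY)>0$. Being bounded below, and bounded (since $T-\lambda I$ is bounded), $(T-\lambda I)_{|Y}$ is an isomorphism onto its image.

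For the second conclusion I again argue by contradiction, supposing $d(Y,(T-\lambda I)Y)=0$. Then there are $y_n\in S_Y$ and $z_n\in Y$ with $\|y_n-(T-\lambda I)z_n\|\to 0$, which I rewrite as $\|u_n-Tz_n\|\to 0$, where $u_n:=y_n+\lambda z_n\in Y$. The key step is to show that $\|u_n\|$ is bounded away from $0$: if not, then $\|Tz_n\|\le\|u_n\|+\|u_n-Tz_n\|\to 0$ along a subsequence, whence $\|z_n\|\to 0$ because $\|Tz_n\|\ge c\|z_n\|$, and therefore $\|y_n\|=\|u_n-\lambda z_n\|\le\|u_n\|+|\lambda|\|z_n\|\to 0$, contradicting $\|y_n\|=1$. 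Once $\|u_n\|\ge\beta>0$, I normalize: since $\tfrac{u_n}{\|u_n\|}\in S_Y$ and $\tfrac{1}{\|u_n\|}Tz_n\in TY$, I get $d(Y,TY)\le \tfrac{\|u_n-Tz_n\|}{\|u_n\|}\le\tfrac{\|u_n-Tz_n\|}{\beta}\to 0$, once more contradicting $d(Y,TY)>0$.

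I expect the only real subtlety to be this normalization step in the second part. The geometric idea—absorbing $\lambda I$ into $Y$ and recognizing $u_n\in Y$, $Tz_n\in TY$—is immediate, but converting the limiting relation $\|u_n-Tz_n\|\to 0$ into a genuine statement about $d(Y,TY)$ requires knowing that the relevant vectors do not degenerate to $0$, and it is precisely here that the full strength of the hypothesis, namely that $T$ is an isomorphism (not merely bounded) on $Y$, enters the argument.
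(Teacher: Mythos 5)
Your proof is correct. It rests on the same basic observation as the paper's --- since $Y$ is a linear subspace, vectors of the form $\lambda z$ with $z\in Y$ can be absorbed into $Y$, so the $\lambda I$ term never affects distances measured against $Y$ --- but you implement it along a genuinely different route. The paper first proves an \emph{equivalence}: the two hypotheses ($T$ an isomorphism on $Y$ and $d(Y,TY)>0$) hold if and only if there is a single constant $c>0$ with $d(Ty,Y)\geq c$ for every $y\in S_Y$; since $d((T-\lambda I)y,Y)=d(Ty,Y)$ for $y\in S_Y$, this condition is invariant under translation by $\lambda I$, and both conclusions drop out simultaneously. You skip the reformulation and instead run two separate sequence-based contradiction arguments, each time normalizing the vector that lies in $Y$ (namely $\tfrac{\lambda}{|\lambda|}y_n$ in the first part, $u_n/\|u_n\|$ in the second) so as to violate $d(Y,TY)>0$ directly. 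Your route has two small dividends: you never need the symmetry fact, used twice in the paper's argument, that $d(Y,TY)>0$ iff $d(TY,Y)>0$ (because you always normalize on the $Y$ side, matching the asymmetric definition of $d$); and it isolates exactly where each hypothesis enters --- for $\lambda\neq 0$ the lower bound on $(T-\lambda I)_{|Y}$ needs only $d(Y,TY)>0$, while the isomorphism hypothesis on $T$ is used solely in the normalization step $\|u_n\|\geq\beta>0$ of the second part. What the paper's version buys in exchange is a uniform, quantitative, and reusable reformulation (a single condition $d(Ty,Y)\geq c$ equivalent to the conjunction of both properties), whereas your argument yields the two conclusions qualitatively and separately.
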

\begin{proof}
First, note that the two hypotheses on Y (that $T$ is an isomorphism on $Y$ and $d(Y,TY)>0$) are together equivalent to the existence of a constant $c>0$ s.t. for all $y \in S_Y$, $d(Ty,Y)>c$. To see this, let us first assume that the hypotheses of the theorem are satisfied. Then there exists a constant $C$ such that
$\|Ty\|\geq C$ for every $y\in S_Y$. For an arbitrary $y\in S_Y$, let $z_y = \frac{Ty}{\|Ty\|}$ and then we clearly have
$$
d(Ty,Y) = \|Ty\|d(z_y, Y)\geq Cd(TY,Y) =: c > 0.
$$
To show the other direction note that for $y\in S_Y$, $0<c < d(Ty, Y) = \|Ty\|d(z_y, Y)\leq \|T\|d(z_y, Y)$.
Taking the infimum over all $z_y\in S_Y$ in the last inequality, we obtain that $d(TY, Y)>0$ and hence $d(Y, TY)>0$.
On the other hand, for all $y\in S_Y$ we have 
$$
0<c < d(Ty, Y)\leq \|Ty - \frac{c}{2}y\|\leq  \|Ty\| + \frac{c}{2},
$$
hence $\|Ty\|\geq \frac{c}{2}$, which in turn implies that $T$ is an isomorphism on $Y$.

Now it is easy to finish the proof. The condition $d(Ty,Y)>c$ for all $y\in S_Y$ is clearly satisfied if we substitute $T$ with $T-\lambda I$ since for a fixed $y\in S_Y$,
$$
d((T-\lambda I)y,Y) = \inf_{z\in Y} \|(T-\lambda I)y-z\| = \inf_{z\in Y} \|Ty-z\|= d(Ty, Y),
$$
hence $(T-\lambda I)_{|Y}$ is an isomorphism and $d(Y,(T-\lambda I)Y)>0$.
\end{proof}
\noindent
Note the following two simple facts:
\nobreak
\begin{itemize}
\item If $T\colon\X\to\X$ is a commutator on $\X$ and $S\colon\X\to\Y$ is an onto isomorphism, then  $STS^{-1}$ is a commutator on $\Y$.
\item Let $T\colon\X\to\X$ be such that  there exists $X_1\subset \X$ for which $T_{|X_1}$ is an isomorphism and $d(X_1,TX_1)>0$. If $S\colon\X\to\Y$ is an onto isomorphism, then there exists $Y_1\subset \Y$, $Y_1\simeq X_1$, such that ${STS^{-1}}_{|Y_1}$ is an isomorphism and 
$d(Y_1, STS^{-1}Y_1)>0$ (in fact $Y_1 = SX_1$). Note also that if $X_1$ is complemented in 
$\X$, then $Y_1$ is complemented in $\Y$.
\end{itemize}
Using the two facts above, sometimes we will replace an operator $T$ by an operator $T_1$ which is  similar to $T$ and possibly acts on another Banach space.

If $\seqo{Y}$ is a sequence of arbitrary Banach spaces, by $\sumspace$ we denote the space of all sequences $\seqo{y}$ where $y_i\in Y_i$, $i=0,1,\ldots$, such that $(\|y_i\|_{Y_i})\in\ell_p$ with the norm $\|(y_i)\| = \|\|y_i\|_{Y_i}\|_p$ (if $Y_i \equiv Y$ for every $i=0,1,\ldots$ we will use the notation  $\sumY$). We will only consider the case where all the spaces $Y_i$, $i=0,1\ldots$,
are uniformly  isomorphic to a Banach space $Y$, that is, there exists a constant $\lambda >0$ and sequence of 
isomorphisms $\{T_i\colon Y_i\to Y\}_{i=0}^{\infty}$ such that $\|T^{-1}\| = 1$ and $\|T\|\leq\lambda$. In this case we define an isomorphism $U\colon\sumspace\to\sumY$  via $(T_i)$ by 
\begin{equation}\label{defU}
U(y_0, y_1, \ldots) = (T_0(y_0), T_1(y_1), \ldots),
\end{equation}
and it is easy to see that $\|U\|\leq\lambda$ and $\|U^{-1}\| = 1$. Sometimes we will identify the space $\sumspace$ with  $(\sum Y)_p$ via the isomorphism $U$ when there is no ambiguity how the properties of an operator $T$ on $\sumspace$ translate to the properties of the operator $UTU^{-1}$ on $\sumY$.\\
For $y=(y_i)\in (\sum Y)_p$ , $y_i\in Y$, define the following two operators :
$$
\RS (y) = (0,y_0,y_1, \ldots)\qquad ,\qquad \LS(y) = (y_1, y_2, \ldots).
$$
The operators $\LS$ and $\RS$ are, respectively, the left and the right shift on the space $(\sum Y)_p$.
Denote by $P_i$, $i=0,1,\ldots$, the natural, norm one, projection from $\sumY$ onto  the  $i$-th component of $\sumY$, which we denote by $Y^i$. We should note that if $Y\simeq\sumY$, then some of the results in this paper are similar to results in \cite{Dosev}, but initially we do not require this condition, and, in particular, some of the results we prove here have applications to spaces like $\big (\sum \ell_q\big )_p$ for arbitrary $1\leq p,q \leq\infty$.
Our first proposition shows some basic properties of the left and the right shift as well 
as the fact that all the powers of $L$ and $R$ are uniformly bounded, which will play an important role in the sequel.
Since the proof follows immediately from the definitions we will omit it.
\begin{proposition}\label{propLRbounds1}
Consider the Banach space $\sumY$. We have the following identities
\begin{equation}\label{eq:LRbound1}
\|L^n\| = 1  \,\,,\,\, \|R^n\| =  1 \,\,\,\textrm{for every}\,\, n = 1,2,\ldots
\end{equation}
\begin{equation}\label{eq:LRidentities1}
\LS P_0 = P_0\RS = 0 \,\,,\,\, \LS\RS = I  \,\,,\,\,\RS\LS = I-P_0\,\,,\,\, \RS P_i = P_{i+1}\RS \,\,\, , \,\,\, P_i\LS = \LS P_{i+1} \,\,\, \textrm{for}\,\,\, i\geq 0 .
\end{equation} 
\end{proposition}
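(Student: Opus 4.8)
The plan is to verify every assertion by a direct, coordinate-by-coordinate computation on a generic element $y = (y_i)_{i=0}^{\infty} \in \sumY$, since both shifts and both projections are defined purely through their action on the coordinates of $y$. First I would record the explicit formulas for the powers of the shifts, and then read off the norm bounds and the algebraic identities from them.

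For the norm identities, I would begin by observing that $\RS$ is an isometry into $\sumY$: for any $y$,
$$
\RS^n y = (\underbrace{0,\ldots,0}_{n},\, y_0, y_1, \ldots),
$$
which merely reindexes the sequence and leaves the scalar sequence $(\|y_i\|_{Y_i})_i$ unchanged, so $\|\RS^n y\| = \|y\|$ and hence $\|\RS^n\| = 1$. For $\LS$ I would note that
$$
\LS^n y = (y_n, y_{n+1}, \ldots),
$$
so that discarding the first $n$ coordinates can only decrease the norm of $(\|y_i\|_{Y_i})_i$, giving $\|\LS^n y\| \le \|y\|$ and therefore $\|\LS^n\| \le 1$; equality then follows by testing on any $y$ with $\supp y \subset \{i : i \ge n\}$, for which $\|\LS^n y\| = \|y\|$. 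The same argument works uniformly in $p$, including $p = \infty$, where the norm is a supremum.

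The algebraic identities are then immediate from the defining formulas. Applying $P_0$ and $\LS$ to $\RS y = (0, y_0, y_1, \ldots)$ yields $P_0 \RS y = 0$ and $\LS \RS y = (y_0, y_1, \ldots) = y$; applying $\LS$ to $P_0 y = (y_0, 0, 0, \ldots)$ gives $\LS P_0 y = 0$; and $\RS \LS y = (0, y_1, y_2, \ldots) = y - P_0 y = (I - P_0)y$. For the two intertwining relations I would compare both sides slot by slot: $P_i y$ is supported in the $i$-th coordinate and $\RS$ carries it to the $(i+1)$-st, which is exactly what $P_{i+1}$ isolates from $\RS y$, so $\RS P_i = P_{i+1}\RS$; dually, the $i$-th coordinate of $\LS y$ is $y_{i+1}$, matching the action of $\LS P_{i+1}$, so $P_i \LS = \LS P_{i+1}$.

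Since each step simply evaluates the defining formulas, there is essentially no obstacle — the content is pure bookkeeping, which is why the authors omit it. The only point meriting a word of care is the distinction between $\|\LS^n\| \le 1$ and $\|\LS^n\| = 1$: the upper bound is automatic from contractivity, while equality requires producing an exact norming element, and one should confirm that the element chosen above witnesses equality uniformly across all $1 \le p \le \infty$ rather than relying on any particular value of $p$.
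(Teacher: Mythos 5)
Your proof is correct and is precisely the routine verification the paper has in mind when it omits the proof as following "immediately from the definitions": coordinatewise formulas for $\LS^n$ and $\RS^n$ give the norm identities (with the supported-away-from-the-first-$n$-slots element witnessing $\|\LS^n\|=1$ uniformly in $p$), and slot-by-slot comparison gives the algebraic relations. No gaps; nothing further is needed.
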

\noindent
Note that we can define a left and right shift on $\sumspace$ by 
$\tilde L = U^{-1}LU$ and $\tilde R = U^{-1}RU$, and, using the above proposition, we immediately have 
$\|\tilde R^n\|\leq\lambda$ and $\|\tilde L^n\|\leq\lambda$. If there is no ambiguity, we will denote the left and the right shift on  $\sumspace$ simply by $L$ and $R$.

\noindent
Following the ideas in \cite{Apostol_lp}, for $1\leq p<\infty$ and $p=0$ define the set
\begin{equation}\label{eq:ADDef}
\AD = \{T\in \sumY : \sum_{n=0}^{\infty} \RS^nT\LS^n \,\,\,\textrm{is strongly convergent}\},
\end{equation}
and for $T\in\AD$  define
$$
\TD = \sum_{n=0}^{\infty} \RS^nT\LS^n .
$$
Now using the fact that an operator $T$ is a commutator if and only if $T$ is in the range of $D_S$ for some $S$,
where $D_S$ is the inner derivation determined by $S$, defined by $D_S(T) = ST - TS$,
it is easy to see (\cite[Lemma 3]{Dosev}) that if $T\in\AD$ then
\begin{equation}\label{eq:TinADRepresentation}
T = D_{\LS}(\RS\TD) = -D_{\RS}(\TD\LS),
\end{equation}
hence $T$ is a commutator. 
\section{Commutators on $\sumY$}
The ideas in this section are similar to the ideas in \cite{Dosev}, but here we present them from a different point of view, in a more general setting and we also include the case $p=\infty$.
The following lemma is a generalization of \cite[Lemma 2.8]{Apostol_lp} in the case $p=\infty$ and 
\cite[Corollary 7]{Dosev} in the case $1\leq p<\infty$ and $p=0$. The proof presented here follows the ideas of the proof in \cite{Dosev}. Of course, some of the ideas can be traced back to the classic paper
of Brown and Pearcy (\cite{BrownPearcy}) and to Apostol's papers \cite{Apostol_lp}, \cite{Apostol_c0}, and the references therein. 
\begin{lemma}\label{wstarconvlemma}
Let $T\in\opsumY$. Then the operators $P_0T$ and $TP_0$ are commutators.
\end{lemma}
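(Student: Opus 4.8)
My plan is to realize $P_0T$ and $TP_0$ as commutators by conjugating them, via an isomorphism of $\sumY$, to operators supported in a single coordinate block, and then to run the $\AD$--machinery of \eqref{eq:TinADRepresentation}. The guiding observation is that both operators factor through the single summand $Y^0 = P_0(\sumY)$: writing $\iota_0\colon Y^0\hookrightarrow\sumY$ and $\pi_0 = P_0\colon\sumY\to Y^0$, we have $P_0T = \iota_0 g$ with $g = P_0T$, and $TP_0 = V\pi_0$ with $V = T\iota_0$. One cannot simply verify $P_0T\in\AD$: the $n$-th coordinate of $\sum_n R^nP_0TL^n$ equals $P_0TL^n$, and for a general bounded $T$ these values need not be $p$-summable (already for $Y=\CN$, take $T$ to be a single nonzero row whose Fourier symbol is unbounded; the associated co-analytic Toeplitz operator is unbounded, so the series diverges strongly). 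The point of the argument is therefore to change the decomposition so that the operator both \emph{reads} and \emph{writes} inside one block.

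The key step is that a \emph{corner} operator is automatically a commutator. Regroup $\sumY\simeq\big(\sum_{k\ge0}W_k\big)_p$ with each $W_k\simeq\sumY$ (legitimate since $\sumY\simeq\big(\sum\sumY\big)_p$ for all $1\le p\le\infty$ and $p=0$), and let $L,R$ now denote the shifts of this coarser decomposition and $Q_k$ its block projections. If $E = Q_0EQ_0$ is supported in the block $W_0$, then $R^kEL^k$ is supported in $W_k$, these supports are pairwise disjoint, and $\sum_k R^kEL^k = \operatorname{diag}(E_0,E_0,\dots)$, where $E_0$ is the compression of $E$ to $W_0$. Applied to $x=(x_k)$ this sum has $k$-th block $E_0x_k$ with $\sum_k\|E_0x_k\|^p\le\|E\|^p\|x\|^p$, so it converges strongly for $1\le p<\infty$ and $p=0$; hence $E\in\AD$ and $E = D_{L}(R\TD)$ is a commutator by \eqref{eq:TinADRepresentation}. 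For $p=\infty$ strong convergence fails, since the tails $\sup_{k>N}\|E_0x_k\|$ need not vanish; here one instead uses that the partial sums are uniformly bounded by $\|E\|$ and converge in the weak-star operator topology, which suffices to define $\TD$ and to validate the commutator identity. This weak-star passage is the one genuinely new ingredient for $\linf$ and the reason the statement can be proved for all $p$ uniformly.

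It then remains to conjugate $P_0T$ and $TP_0$ into corner form. Since $P_0T = \iota_0 g$ factors through the single copy $Y^0$, the kernel $\ker g$ has ``one-copy'' corank, and I would produce a decomposition $\sumY = Z_0\oplus\big(\sum_{k\ge1}Z_k\big)_p$ with every $Z_k\simeq\sumY$, with $Y^0\subseteq Z_0$, and with $\bigoplus_{k\ge1}Z_k\subseteq\ker g$; against this decomposition $P_0T$ is supported in $Z_0$, i.e.\ a corner, so the previous paragraph applies. For $TP_0 = V\pi_0$ one argues dually, choosing $Z_0\supseteq Y^0 + V(Y^0)$ and the remaining blocks inside $\ker\pi_0 = \ker P_0$. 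Because conjugation by an isomorphism preserves commutators (the first of the two facts recorded after Proposition~\ref{distanceprop}), this completes the proof. The main obstacle is precisely this reduction: constructing, for an arbitrary summand $Y$ and arbitrary bounded $g$ (resp.\ $V$), a complemented copy of $\big(\sum\sumY\big)_p$ sitting inside $\ker g$ whose complementary block absorbs $Y^0$ together with the range of the operator — that is, absorbing the ``single-copy'' part of the operator into one block of a self-similar decomposition. The secondary technical point is to make the $p=\infty$ weak-star convergence rigorous, since there the shift-averages converge only weak-star rather than in norm.
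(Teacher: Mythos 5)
Your corner lemma itself is sound (for $1\le p<\infty$ and $p=0$ it is exactly the computation the paper performs for $P_iTP_j$), and your observation that $P_0T\notin\AD$ in general for finite $p$ is correct. The genuine gap is the reduction step, which you yourself flag as the main obstacle: it is not merely unproved, it is impossible in general. For $P_0T=\iota_0 g$ you require a decomposition $\sumY=Z_0\oplus\big(\sum_{k\ge 1}Z_k\big)_p$ with each $Z_k\simeq\sumY$ and $\bigoplus_{k\ge 1}Z_k\subseteq\ker g$. But $\ker g$ can be $\{0\}$: on $\big(\sum\ell_2\big)_2$, identify both the whole space and the block $Y^0$ with $\ell_2$ and take $T=\iota_0 g$ where $g$ is an injective diagonal operator (entries $1/n$); then $P_0T=T$ is injective, so no nonzero subspace, let alone a complemented copy of $\big(\sum\sumY\big)_p$, sits inside its kernel. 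The phrase that $\ker g$ has ``one-copy corank'' tacitly assumes $g$ has closed range and complemented kernel, which fails for general bounded operators. Dually, for $TP_0=V\pi_0$ you need a closed complemented $Z_0\supseteq Y^0+V(Y^0)$ whose complement is built from blocks; but $V=T\iota_0$ can have dense non-closed range (same diagonal example), in which case $\overline{Y^0+V(Y^0)}=\sumY$ and the decomposition degenerates. So neither operator can be conjugated into corner form, and the proposal as written does not prove the lemma.

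What is missing is the paper's device for handling the off-corner parts algebraically rather than geometrically. The corner $P_0TP_0$ is already in corner position, so no re-decomposition is ever needed: for $1\le p<\infty$ and $p=0$ one shows $P_0TP_0\in\AD$ (your computation), and then, using $LP_0=P_0R=0$ and $RL=I-P_0$, one checks $D_R(LTP_0)=RLTP_0-LTP_0R=(I-P_0)TP_0$, whence $TP_0=D_R\big(LTP_0-(P_0TP_0)_{\AD}L\big)$ by \eqref{eq:TinADRepresentation}, and similarly $P_0T=D_L\big(-P_0TR+R(P_0TP_0)_{\AD}\big)$. These two identities are precisely the bridge from the corner $P_0TP_0$ to $P_0T$ and $TP_0$ that your outline lacks, and they cost nothing beyond the shift relations. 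For $p=\infty$ the paper needs no weak-star structure (which is unavailable anyway: for a general $Y$ the space $\sumYI$ is not a dual space): it sums $S_0=\sum_n R^nP_0TL^n$ directly, noting that the $n$-th term lies in the block $Y^n$ and has norm at most $\|T\|\|x\|$, so the series converges coordinatewise in the product topology, in which $L$ and $R$ are continuous; the algebra then gives $P_0T=D_L(RS_0)$, and $TP_0$ is recovered from the corner series $\sum_n R^nP_0TP_0L^n$ by the same off-corner identities.
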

\begin{proof}
The proof shows that $P_0T$ is in the range of $D_L$ and $TP_0$ is in the range of $D_R$.
We will consider two cases depending on $p$.

\noindent
{\textbf{Case I : $p=\infty$}}\\
In this case we first observe that the series
$$
S_0 = \sum_{n=0}^{\infty} R^nP_0TL^n
$$
is pointwise convergent coordinatewise.
Indeed, let $x\in\sumYI$ and define $y_n = R^nP_0TL^nx$ for $n = 0,1,\ldots$. Note that from
the definition we immediately have $y_n\in Y^n$ so the sum $\sum_{n=0}^{\infty} y_n$ converges in the product topology on
$\sumYI$ to a point in $\sumYI$
 since  $\|y_n\|\leq \|R^n\|\|P_0\|\|T\|\|L^n\|\|x\| \leq \|T\|\|x\|$.

Secondly, we observe that $S_0$ and $L$ commute.  Because $L$ and $R$ are continuous operators on
$\sumYI$ with the product topology and $LR=I$, we have
\begin{equation}\label{eq:wstarconv1}
\begin{split}
S_0Lx&=\sum_{n=0}^{\infty}R^nP_0TL^{n+1}x =
L\left (\sum_{n=1}^{\infty}R^nP_0TL^nx \right )
= L\left (\sum_{n=0}^{\infty}R^nP_0TL^nx\right )  - LP_0Tx\\
&= LS_0x - 0
\end{split}
\end{equation}
since $LP_0=0$. That is,  $D_LS_0 =0$, as desired.

On the other hand, again using $LP_0=0$,
\begin{equation}\label{eq:wstarconv3}
\begin{split}
(I-RL)S_0x&=\sum_{n=0}^{\infty}(I-RL)R^nP_0TL^nx  
= (I-RL)P_0Tx + \underbrace{\sum_{n=1}^{\infty}(I-RL)R^nP_0TL^nx}_{0} \\
&=(I-RL)P_0Tx = P_0Tx.
\end{split}
\end{equation}
Therefore
\begin{equation}
D_L(RS_0) = (D_LR)S_0 + R(D_LS_0) = (I-RL)S_0 + 0 = P_0 S_0 = P_0T.
\end{equation}
The proof of the statement that $TP_0$ is a commutator
involves a similar modification of the proof of \cite[Lemma 2.8]{Apostol_lp}.
Again, consider the series
$$
S = \sum_{n=0}^{\infty} R^nP_0TP_0L^n .
$$
This is pointwise convergent coordinatewise and $SL=LS$ (from the above reasoning applied to the operator $TP_0$), and
\begin{equation*}
\begin{split}
D_R(-SL) &= -D_R(LS)= -RLS+LSR=-(I-P_0)S+LSR\\
&=-S+P_0S+SLR= -S+P_0S+S=P_0TP_0.
\end{split}
\end{equation*}
Now it is easy to see that
$$
D_R(LTP_0 - SL) =  RLTP_0 - \underbrace{LTP_0R}_{0} + P_0TP_0 = (I-P_0)TP_0 + P_0TP_0 = TP_0.
$$

\noindent
{\textbf{Case II :}} $1\leq p<\infty$ or $p=0$\\
In this case the proof is similar to the proof of \cite[Lemma 6 and Corollary 7]{Dosev} and we include it for completeness.
Let us consider the case $p\geq 1$ first. For any $y\in\sumY$  we have
\begin{eqnarray*}
\|\sum_{n=m}^{m+r} \RS^{n}P_iTP_j\LS^ny\|^p &=&  \|\sum_{n=m}^{m+r} \RS^{n}P_iTP_j\LS^nP_{j+n}y\|^p  
= \sum_{n=m}^{m+r} \|\RS^{n}P_iTP_j\LS^nP_{j+n}y\|^p\\
&\leq& \|P_iTP_j\|^p\sum_{n=m}^{m+r}\|P_{j+n}y\|^p 
\leq  \|P_iTP_j\|^p\sum_{n=m}^{\infty}\|P_{j+n}y\|^p.
\end{eqnarray*}
Since $\displaystyle \sum_{n=m}^{\infty}\|P_{j+n}y\|^p\to 0$ as $m\to\infty$ we have that 
$\displaystyle \sum_{n=0}^{\infty} \RS^{n}P_iTP_j\LS^n$ is strongly convergent and $P_iTP_j\in\AD$.\\
For $p=0$ a similar calculation shows 
\begin{eqnarray*}
\|\sum_{n=m}^{m+r} \RS^{n}P_iTP_j\LS^ny\| &=&  \|\sum_{n=m}^{m+r} \RS^{n}P_iTP_j\LS^nP_{j+n}y\|  
= \max_{m\leq n\leq m+r} \|\RS^{n}P_iTP_j\LS^nP_{j+n}y\|\\
&\leq&  \|P_iTP_j\|\max_{m\leq n\leq m+r}\|P_{j+n}y\|  
\end{eqnarray*}
and since $\displaystyle \max_{m\leq n\leq m+r}\|P_{j+n}y\|\to 0$ as $m\to\infty$  we apply the same argument as in the case $p\geq 1$  to obtain $P_iTP_j\in\AD$.\\
Using $P_iTP_j\in\AD$ for $i=j=0$ and (\ref{eq:TinADRepresentation}) we have 
$P_0TP_0 = D_{\LS}(\RS(P_0TP_0)_{\AD}) =$\\ $ -D_{\RS}((P_0TP_0)_{\AD}\LS)$. Again, as in \cite[Corollary 7]{Dosev}, via direct computation we obtain
\begin{eqnarray}
TP_0 &=& D_{\RS}(\LS TP_0 - (P_0TP_0)_{\AD}\LS)\\
P_0T &=& D_{\LS}(-P_0T\RS + \RS(P_0TP_0)_{\AD}).
\end{eqnarray}
\end{proof}

\noindent
Now we switch our attention to Banach spaces which in addition satisfy 
$\X\simeq\sumX$ for some $1\leq p\leq\infty$ or $p=0$. Note that the Banach space $\sumY$ satisfies this condition regardless of the space $Y$, hence we will be able to use  the results we proved so far in this section. We begin with a definition.
\begin{definition}
Let $\X$ be a Banach space such that $\X\simeq \sumX$, 
$1\leq p\leq\infty$ or $p=0$. We say that $\displaystyle \decomp =\{X_i\}_{i=0}^{\infty} $ is a decomposition of $\X$ if it forms an $\ell_p$ or $c_0$ decomposition of $\X$ into subspaces which are uniformly isomorphic to $\X$;
that is, if the following three conditions are satisfied:
\begin{itemize}
\item There are uniformly bounded projections $P_i$ on $\X$ with $P_i\X = X_i$ and $P_iP_j = 0$ for $i\neq j$
\item There exists a collection of isomorphisms $\psi_i : X_i\to \X$, $i\in\N$, such that  $\|\psi_i^{-1}\| = 1$ and
$\displaystyle \lambda = \sup_{i\in\N}\|\psi_i\| < \infty$
\item The formula $Sx = (\psi_iP_ix)$ defines a surjective isomorphism from $\X$ onto $\sumX$
\end{itemize}
\end{definition}

If $\decomp =\{X_i\}_{i=0}^{\infty}$ is a decomposition of $\X$ we have 
$\X\simeq \sumX\simeq \left ( \sum_{i=0}^{\infty} X_i\right)_{p}$, where the second isomorphic relation is via the isomorphism $U$ defined in (\ref{defU}). Using this simple observation we will often identify $\X$ with $\left ( \sum_{i=0}^{\infty} X_i\right)_{p}$. Our next theorem is similar to 
\cite[Theorem 16]{Dosev} and \cite[Theorem 4.6]{Apostol_lp}, but we state it and prove it in a more general setting and also include the case $p=\infty$.

\begin{theorem}\label{similaritythm}
Let $\X$ be a Banach space such that $\X\simeq\sumX$, 
$1\leq p\leq\infty$ or $p=0$. Let $T\in\opX$ be such that there exists a subspace $X\subset\X$ such that $X\simeq \X$, $T_{|X}$ is an isomorphism, $X+T(X)$ is complemented in $\X$ and $d(X, T(X))>0$.
Then there exists a decomposition $\decomp$ of $\X$ such that $T$ is similar to a matrix operator of the form
$$\left( \begin{array}{cc}
* & L  \\
* & * 
\end{array} \right)
$$
on $\X\oplus\X$, where $L$ is the left shift associated with $\decomp$.
\end{theorem}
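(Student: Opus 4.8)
The plan is to build the decomposition $\decomp$ of $\X$ by hand from the triple $X$, $TX$ and a complement, and to choose the identifying isomorphisms so carefully that $T$ realizes the backward shift exactly in one corner. I would begin by unwinding the hypotheses. Since $d(X,TX)>0$ we have $X\cap TX=\{0\}$ and, by the open--mapping--theorem remark, $X+TX=X\oplus TX$ is closed; the complementation assumption then gives a splitting $\X=X\oplus TX\oplus N$ into closed, mutually complemented subspaces. Because $X\simeq\X\simeq\sumX$, the subspace $X$ carries its own $\ell_p$- or $c_0$-decomposition $X=\big(\sum_{n\ge 0}W_n\big)_p$ with each $W_n\simeq\X$ and uniformly bounded coordinate projections. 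Applying the isomorphism $T|_X$ transports this to a decomposition $TX=\big(\sum_{n\ge 0}TW_n\big)_p$ of the complemented subspace $TX$, with the $TW_n\simeq\X$ uniformly and with $\|T|_X\|,\|(T|_X)^{-1}\|$ controlling the relevant constants.

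Next I would interlace the two families $\{W_n\}$ and $\{TW_n\}$ with a one--step offset to create a staircase, so that $T$ carries the $n$-th subspace of one copy onto the $(n-1)$-th subspace of the other. The leftover boundary piece and the complement $N$ are absorbed into finitely many coordinates; here I would use $\X\simeq\sumX$ together with the complementation of $N$ to guarantee that, after absorption, the family still consists of subspaces uniformly isomorphic to $\X$ with uniformly bounded projections and spans all of $\X$, i.e. is a genuine decomposition $\decomp$ in the sense of the definition above. The essential freedom is the choice of the coordinate isomorphisms $\psi_i\colon X_i\to\X$: on the coordinates coming from the $W_n$ I fix them arbitrarily (normalized so $\|\psi_i^{-1}\|=1$), and on the coordinates coming from the $TW_n$ I \emph{define} them through $T|_X$, so that $\psi$ and $T$ intertwine. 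This forces $T$ to coincide with the left shift $\LS=\LS_{\decomp}$ on precisely those coordinates, and the uniform bounds on $T|_X$ and on the decomposition of $X$ yield $\sup_i\|\psi_i\|<\infty$.

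To read off the matrix, I would take the external doubling $\X\simeq\mathcal A\oplus\mathcal B$ in which both summands are identified, through the $\psi_i$, with the same decomposed space carrying $\decomp$, and let $S$ be the resulting surjective isomorphism. By construction the $\mathcal B\to\mathcal A$ block of $STS^{-1}$ is exactly $\LS$, while the three remaining blocks record what $T$ does on $TX$ and on the absorbed complement and are left as the unspecified entries of the displayed matrix. Passing from $T$ to $STS^{-1}$ is legitimate by the two ``simple facts'' following Proposition~\ref{distanceprop}, and in the case $p=\infty$ I would handle the infinite sums defining $\decomp$ and $\LS$ in the coordinatewise/product topology exactly as in Case~I of Lemma~\ref{wstarconvlemma}.

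I expect the main obstacle to be reconciling the injectivity of $T|_X$ with the non--injectivity of the left shift $\LS$, which is the block we are trying to produce. This is what prevents the naive choices (an even/odd split of $\decomp$, or the split $\mathcal B=X$ against its complement) from working: those force the corner block to be an isomorphism rather than a shift. The similarity $S$ must therefore genuinely mix $X$ with the complement $TX\oplus N$, and the delicate bookkeeping is to arrange the one--step offset and the normalization of the $\psi_i$ so that the kernel coordinate of $\LS$ is matched with a coordinate whose $T$-image is swept into the free blocks, leaving the upper--right block equal to $\LS$ and not to $\LS$ plus a boundary perturbation. Maintaining the uniform decomposition constants throughout this absorption is the other point that needs care.
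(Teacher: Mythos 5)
Your overall skeleton (split $\X = X\oplus T(X)\oplus Z$, build two interleaved decompositions, conjugate by a map assembled from their shifts) matches the paper's, but the proposal has a genuine gap at its central claim: no choice of the blocks $W_n$, of the one-step offset, or of the identifications $\psi_i$ can make the upper-right corner come out \emph{equal} to $L$, because that corner does not depend only on $T_{|X}$. In any similarity of the kind you describe, the upper-right block has the form $QTV$, where $V$ identifies the second copy of $\X$ with a subspace $\mathcal{B}\subset\X$ and $Q$ is a surjection onto $\X$ with $\ker Q=\mathcal{B}$; for this block to be the (surjective) left shift one needs $\mathcal{B}+T(\mathcal{B})=\X$, and one also needs a kernel coordinate $B_0\subset\mathcal{B}$ with $T(B_0)\subset\mathcal{B}$. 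This already kills the natural choice $\mathcal{B}=X$: since $T_{|X}$ is injective and $d(X,T(X))>0$, no coordinate $B_0\subset X$ can have $T(B_0)\subset\mathcal{B}$, and moreover $X+T(X)\subsetneq\X$ when $Z\neq\{0\}$, so the corner cannot even be surjective. If instead you push $Z$ (your $N$) into $\mathcal{B}$ to fix surjectivity, then, writing $P$ for the projection onto $X\oplus Z$ along $T(X)$ and choosing the identifications so that $T$ intertwines the shifts on the $X$/$T(X)$ coordinates, the corner computes to $L+E$, where $E$ is built from $(I-P)T$ restricted to the coordinates of $\mathcal{B}$ lying over $Z$. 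Nothing in the hypotheses controls $(I-P)T_{|Z}$ (the operator $T$ may well map $Z$ into $T(X)$), and $E$ is not a boundary perturbation concentrated at the kernel coordinate, as your last paragraph hopes: it acts on every coordinate coming from $Z$. The ``delicate bookkeeping'' you defer is therefore not bookkeeping; it is the actual content of the theorem.

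The paper resolves exactly this point with an extra similarity that your proposal lacks. After conjugating by $S\varphi=\LSo\varphi\oplus\LSt\varphi$, whose corner is $\LSo T\RSt$, it sets $A=(I-P)T\RSt$, proves that $A+P$ is invertible with inverse $G=I+T_0(I-P)-T_0A$ (here $T_0$ inverts $A$ on $(I-P)\X$, which works because $\RSt$ carries $Y_0=T(X)$ onto $Y_1=X$ and $(I-P)T$ is an isomorphism of $X$ onto $T(X)$), and then conjugates further by $\bigl(\begin{smallmatrix} I & 0\\ 0 & G\end{smallmatrix}\bigr)$. The corner becomes $\LSo T\RSt G$, and the identities $(A+P)G=I$ and $\LSo P=0$ give $\LSo T\RSt G=\LSo AG=\LSo(I-PG)=\LSo$. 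This automorphism $G$ depends on $T$ on all of $X\oplus Z$, not just on $T_{|X}$; equivalently, the decomposition whose existence the theorem asserts is your hand-built one twisted by $G$, and producing that twist is precisely the step missing from your argument.
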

\begin{proof}
Clearly $\X =  X\oplus T(X)\oplus Z$ where $Z$ is complemented in $\X$.
Note that without loss of generality we can assume that $Z$ is isomorphic to $\X$. Indeed, if this is not the case, let $X = X_1\oplus X_2$, $X\simeq X_1\simeq X_2$ and $X_1, X_2$ complemented in $X$ (hence also complemented in $\X$). Then $d(X_1, T(X_1))>0$ and $\X = X_1\oplus T(X_1)\oplus Z_1$ where $Z_1$ is a complemented subspace of $\X$, which contains the subspace $X_2\subset\X$, such that $X_2$ is isomorphic to $\X$ and complemented in $Z$. Applying  the Pe\l cz\'ynski decomposition technique (\cite[Proposition 4]{Pelcz_pojections}), we conclude that $Z_1$ is isomorphic to $X$. This observation plays an important role and will allow us to construct the decompositions we need during the rest of the proof.\\
Denote by $I-P$ the projection onto $T(X)$ with kernel $X+Z$.
Consider two decompositions $\displaystyle \decomp_{1} =\decompX $, $\displaystyle \decomp_{2} =\decompY $ of $\X$ such that $T(X) = Y_0 = X_1\oplus X_2\oplus\cdots$, $X_0 = Y_1\oplus Y_2\oplus\cdots$, $Y_1 = X$, and 
$Z = Y_2\oplus Y_3\oplus\cdots$.
Define a map $S$
$$
S\varphi = \LSo\varphi\oplus\LSt\varphi , \qquad \varphi\in\X
$$
 from $\X$ to $\X\oplus\X$. The map $S$ is invertible ($S^{-1}(a,b) = \RSo a + \RSt b$).
Just using the definition of $S$ and the formula for $S^{-1}$ we see that 

\begin{eqnarray*}
STS^{-1}(a,b) &=& ST(\RSo a + \RSt b) = S(T\RSo a + T\RSt b) \\
&=& (\LSo T\RSo a + \LSo T\RSt b)\oplus (\LSt T\RSo a + \LSt T\RSt b),
\end{eqnarray*}
hence
$$
STS^{-1} = \left( \begin{array}{cc}
* & \LSo T\RSt   \\
* & * 
\end{array} \right) .
$$
Let 
\begin{equation}\label{eq:A}
A=P_{Y_0}T\RSt = (I-P)T\RSt
\end{equation}
 and note that $A_{|P_{Y_0}\X} \equiv A_{|(I-P)\X} : (I-P)\X\to (I-P)\X$ is onto and invertible since $\RSt$ is an isomorphism on  $P_{Y_0}\X$ and $\RSt(P_{Y_0}\X) = Y_1 = X$.
Here we used the fact that $P_{Y_0}T$ is an isomorphism on $X$ ($PX = X$). Denote by $T_0 : (I-P)\X\to(I-P)\X$ the inverse of $A_{|P_{Y_0}\X}$ (note that $T_0$ is an automorphism on $(I-P)\X$) and consider $G : \X\to\X$ defined by
$$
G = I + T_0(I-P) - T_0A .
$$
We will show that $G^{-1} =  A+P$. In fact, from the definitions of $A$ and $T_0$
 it is clear that 
\begin{equation}\label{eq:AProp}
AT_0(I-P) = T_0A(I-P) = I-P \,,\, PT_0(I-P) = PA = 0 \,,\, (I-P)A = A
\end{equation}
and since $A$ maps onto $(I-P)\X$ and $AT_0 = I_{|(I-P)\X}$ we also have
\begin{equation}\label{eq:AProp1}
A - AT_0A = 0 .
\end{equation}
Now using (\ref{eq:AProp}) and (\ref{eq:AProp1}) we compute

\begin{eqnarray*}
(A+P)G &=& (A+P)(I + T_0(I-P) - T_0A) \\
&=& A + AT_0(I-P) - AT_0A + P = I-P + P = I\\
G(A+P) &=& (I + T_0(I-P) - T_0A)(A+P) \\
&=& A + P + T_0(I-P)A + T_0(I-P)P - T_0AA - T_0AP  \\
&=& A + P + T_0A - T_0AA- T_0AP \\
&=& P + (I - T_0A)A + T_0A(I-P)  \\
&=& P +(I-T_0A)(I-P)A +(I-P)\\
&=& I + ((I-P)-T_0A(I-P))A \\
&=& I + (I-P - (I-P))A = I .
\end{eqnarray*}
Using a similarity we obtain 
$$
\left( \begin{array}{cc}
I & 0   \\
0 & G^{-1} 
\end{array} \right)
\left( \begin{array}{cc}
* & \LSo T\RSt   \\
* & * 
\end{array} \right)
\left( \begin{array}{cc}
I & 0   \\
0 & G 
\end{array} \right) = 
\left( \begin{array}{cc}
* & \LSo T\RSt G   \\
* & * 
\end{array} \right) .
$$
It is clear that we will be done if we show that $\LSo=\LSo T\RSt G$. In order to do this consider
the equation $(A+P)G = I\Leftrightarrow AG + PG = I$. Multiplying both sides of the last equation on the left by $\LSo$ 
gives us $\LSo AG + \LSo PG = \LSo$. Using $\LSo P \equiv \LSo P_{X_0} = 0$ we obtain $\LSo AG = \LSo$. Finally, substituting $A$ from (\ref{eq:A})
 in the last equation yields
$$
\LSo = \LSo AG = \LSo P_{Y_0}T\RSt G = \LSo (I-P_{X_0})T\RSt G = \LSo T\RSt G,
$$
which finishes the proof.
\end{proof}

The following theorem was proved in \cite{Apostol_lp} for $X = \ell_p$, $1<p<\infty$, but inessential modifications give the result in these general settings.

\begin{theorem}\label{diagcomm}
Let $\X$ be a Banach space such that $\X\simeq\sumX$. Let $\decomp$ be a decomposition of $\X$ and let $L$ be the left shift associated with it. Then the matrix operator 
$$
\left( \begin{array}{cc}
T_1 & L   \\
T_2 & T_3 
\end{array} \right)
$$
acting on $\X\oplus\X$ is a commutator.
\end{theorem}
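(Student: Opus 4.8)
The plan is to produce, following the explicit constructions of Apostol and Brown--Pearcy, operators $A,B$ on $\X\oplus\X$ with $[A,B]$ equal to the given matrix, using the shift identities of Proposition \ref{propLRbounds1} as the engine. Two facts will be used throughout: being a commutator is preserved under similarity (so I may conjugate $M:=\begin{pmatrix}T_1&L\\T_2&T_3\end{pmatrix}$ by any invertible operator on $\X\oplus\X$), and the range of a single inner derivation $D_A$ is a linear subspace, so anything that can be pushed into one such range may be added freely. The supply of ``free'' commutators comes from Lemma \ref{wstarconvlemma}, which places every operator of the form $P_0S$ in the range of $D_L$ and every $SP_0$ in the range of $D_R$, together with the membership criterion \eqref{eq:TinADRepresentation} for the set $\AD$.

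The heart of the matter is the corner entry $L$. The identities $LR=I$, $RL=I-P_0$ and $LP_0=P_0R=0$ from \eqref{eq:LRidentities1} give $LRL=L$ and $[L,R]=P_0$, and these let one realize the pure corner as an honest commutator; for instance $\begin{pmatrix}0&L\\0&0\end{pmatrix}=\left[\begin{pmatrix}L&0\\0&0\end{pmatrix},\begin{pmatrix}0&RL\\0&0\end{pmatrix}\right]$, since $L\cdot RL=L$. First I would choose the conjugating similarity and the ``shift part'' of $A$ so that after conjugation the upper-right block is still $L$ while the remaining blocks are simplified; the block-triangular similarities $\begin{pmatrix}I&C\\0&I\end{pmatrix}$ and $\begin{pmatrix}I&0\\D&I\end{pmatrix}$ change the off-diagonal blocks by $L$-twisted terms and are the natural tools, as is the freedom, coming from $\X\simeq\X\oplus\X$, to split each $X_i$ and reshuffle the decomposition $\decomp$ as in Theorem \ref{similaritythm}.

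Next I would solve the resulting system of four block equations $[A,B]=M$ for the entries of $B$. Writing $A$ with nonzero off-diagonal blocks built from $L$ and $R$ (so that the diagonal entries of $[A,B]$ acquire coupling terms) and using $LR=I$ to invert the shift wherever it appears, the equations reduce, after cancellation via $RL=I-P_0$, to identities that hold up to operators carrying an explicit factor $P_0$ on one side. These residual $P_0$-terms are exactly the content of Lemma \ref{wstarconvlemma}: each is a commutator, and one checks they sit in the same derivation range as the main term, so that the whole of $M$ is a single commutator. For $p=\infty$ the only change is that the relevant series must be summed in the coordinatewise (product) topology rather than in norm, precisely as in Case I of the proof of Lemma \ref{wstarconvlemma}; the uniform bounds $\|L^n\|=\|R^n\|=1$ from \eqref{eq:LRbound1} keep the partial sums bounded.

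The hard part will be matching the two diagonal blocks $T_1,T_3$ against $[A,B]$ while simultaneously accommodating the genuinely arbitrary block $T_2$: because $\operatorname{ran}D_L\neq\opX$, a diagonal operator cannot be absorbed by a diagonal derivation, and the most direct ans\"atze over-determine the system, with the obstruction concentrated on the $X_0$-coordinate. Overcoming this is exactly where the corner $L$ earns its keep: the surjectivity of $L$ (via $LR=I$) must be exploited to manufacture the missing degrees of freedom, and one has to verify carefully that the leftover $X_0$-supported discrepancy is not merely a sum of commutators but lands in a single derivation range, so that Lemma \ref{wstarconvlemma} can close the argument.
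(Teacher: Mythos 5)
You have correctly assembled the toolbox (block-triangular similarities, the identities of Proposition \ref{propLRbounds1}, Lemma \ref{wstarconvlemma} as the source of commutators), and you have correctly located the difficulty, but locating it is all your final paragraph does: ``one has to verify carefully that the leftover $X_0$-supported discrepancy lands in a single derivation range'' is a restatement of the problem, not an argument, and the actual proof does \emph{not} close the argument that way. The first missing device is an interlacing of two decompositions. Conjugation by $\bigl(\begin{smallmatrix}I&0\\G&I\end{smallmatrix}\bigr)$ preserves the corner $L$ and changes the diagonal sum by $-D_LG$, so only the sum $T_1+T_3$ (not the individual diagonal blocks) needs to be controlled. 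Take a second decomposition $\decomp_1=\{Y_i\}$ with $Y_0=\bigoplus_{i\ge 1}X_i$ and $X_0=\bigoplus_{i\ge 1}Y_i$, so that $\RSo\LSo=I-P_{Y_0}=P_{X_0}$. Lemma \ref{wstarconvlemma} applied to $\decomp$ gives $G$ with $D_LG=P_{X_0}(T_1+T_3)$; after the similarity the diagonal sum becomes $(I-P_{X_0})(T_1+T_3)=P_{Y_0}(T_1+T_3)$, which is a commutator by Lemma \ref{wstarconvlemma} applied to the \emph{other} decomposition $\decomp_1$. The $X_0$-obstruction you worry about is thus destroyed by playing two complementary decompositions against each other, not by forcing all residuals into one derivation range.

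The second missing device is the endgame, which is not a ``solve the four block equations up to $P_0$-terms'' computation but an exact two-operator commutator with no residue at all. After the reduction one may assume $T_1+T_3=AB-BA$ where, rescaling $A$ and $B$ inversely, $\|A\|$ is so small that $\|M_RD_A\|<1$, with $M_R$ denoting left multiplication by $R$. Then $T_0:=(M_I-M_RD_A)^{-1}M_R(T_3B-T_2)$ is defined by a Neumann series, and one verifies directly that
$$
\left(\begin{array}{cc} A & 0\\ T_3 & A-L\end{array}\right)
\left(\begin{array}{cc} B & I\\ T_0 & 0\end{array}\right)
-
\left(\begin{array}{cc} B & I\\ T_0 & 0\end{array}\right)
\left(\begin{array}{cc} A & 0\\ T_3 & A-L\end{array}\right)
=
\left(\begin{array}{cc} T_1 & L\\ T_2 & T_3\end{array}\right),
$$
where the $(1,1)$, $(1,2)$ and $(2,2)$ entries are immediate and the $(2,1)$ entry requires exactly $LT_0-D_AT_0=T_3B-T_2$, which follows from the defining equation of $T_0$ by multiplying on the left by $L$ and using $LR=I$. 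It is this ansatz --- diagonal blocks $A$ and $A-L$ in one factor, the identity in the corner of the other, and $T_0$ manufactured by inverting $M_I-M_RD_A$ --- that absorbs the arbitrary block $T_2$ exactly and yields a \emph{single} commutator. Without it (or an equivalent construction) your outline cannot be completed: sums of commutators need not be commutators, and there is no apparent single derivation whose range contains your residual terms together with the main one.
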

\begin{proof}
Let $\decomp =\{X_i\}$ be the given decomposition. Consider a decomposition $\decomp_1 = \{Y_i\}$ such that 
$\displaystyle Y_0 = \bigoplus_{i=1}^{\infty}X_i$ and $\displaystyle X_0 = \bigoplus_{i=1}^{\infty}Y_i$.
Now  there exists an operator $G$ such that 
$D_{L_{\decomp}}G = \RSo\LSo (T_1+T_3)$. This can be done using Lemma \ref{wstarconvlemma}, since 
$\RSo\LSo = I - P_{Y_0} = P_{X_0}$. By making the similarity
$$
\widetilde{T} := \left( \begin{array}{cc}
I & 0   \\
G & I 
\end{array} \right)
\left( \begin{array}{cc}
T_1 & L   \\
T_2 & T_3 
\end{array} \right)
\left( \begin{array}{cc}
I & 0   \\
-G & I 
\end{array} \right)
=
\left( \begin{array}{cc}
T_1 - LG & L   \\
* & T_3 + GL 
\end{array} \right)
$$
 we have $T_1 + T_3 - LG + GL = T_1 + T_3 - D_LG = T_1 + T_3 - \RSo\LSo (T_1+T_3) = P_{Y_0}(T_1+T_3)$.
Using Corollary \ref{wstarconvlemma}  again we deduce that $T_1 + T_3 - LG + GL $ is a commutator. Thus  by replacing $T$ by $\widetilde{T}$ 
we can assume that $T_1+T_3$ is a commutator, say $T_1+T_3= AB-BA$ and $\displaystyle \|A\|<1/2$ (this can be done by scaling).
Denote by $M_T$ left multiplication by the operator $T$. Then 
$\displaystyle \|M_RD_A\| < 1$ where $R$ is the right shift associated with $\decomp$. The operator
$T_0 = (M_I - M_RD_A)^{-1}M_R(T_3B - T_2)$ is well defined and it is easy to see that 
$$
\left( \begin{array}{cc}
A & 0   \\
T_3 & A-L 
\end{array} \right)
\left( \begin{array}{cc}
B & I   \\
T_0 & 0 
\end{array} \right)
-
\left( \begin{array}{cc}
B & I   \\
T_0 & 0 
\end{array} \right)
\left( \begin{array}{cc}
A & 0   \\
T_3 & A-L 
\end{array} \right)
=
\left( \begin{array}{cc}
T_1 & L   \\
T_2 & T_3 
\end{array} \right) .
$$
This finishes the proof.
\end{proof}

\section{Operators on $\linf$} 

\begin{definition}\label{def:les}
The left essential spectrum  of $T\in\opX$ is the set (\cite{Apostol_les} Def 1.1)
$$
\les = \{\lambda\in \CN : \inf_{x\in S_Y}\|(\lambda - T)x\| = 0\,\,\textrm{for all}\,\, Y\subset \X\,\,\textrm{s.t.}\,\, {\textrm{codim}}(Y)<\infty\}.
$$
\end{definition}
Apostol \cite[Theorem 1.4]{Apostol_les} proved that for any $T\in\opX$, $\les$ is a closed non-void set. The following lemma is a characterization of the operators not of the form
$\lambda I + K$ on the classical Banach sequence spaces. The proof presented here follows Apostol's ideas \cite[Lemma 4.1]{Apostol_lp}, but it is presented in a more general way.
\begin{lemma}\label{MovingLemma}
Let $\X$ be a Banach space isomorphic to $\ell_p$ for $1\leq p<\infty$ or $c_0$
and let $T\in\opX$.  Then the following are equivalent\\*
 \textbf{(1)} $T - \lambda I$ is not a compact operator for any $\LCN$.\\*
 \textbf{(2)} There exists an infinite dimensional complemented subspace $Y\subset\X$ such that $Y\simeq \X$, $T_{|Y}$ is an isomorphism and  $d(Y, T(Y))>0$.
\end{lemma}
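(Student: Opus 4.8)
The plan is to treat the easy implication (2)$\Rightarrow$(1) directly and to spend the real work on (1)$\Rightarrow$(2). Using the two isomorphism-invariance facts recorded after Proposition~\ref{distanceprop}, I would first replace $T$ by $\Phi T\Phi^{-1}$ for an onto isomorphism $\Phi\colon\X\to\ell_p$ (resp. $c_0$), since both (1) and (2) are preserved under such a similarity; thus I may assume $\X=\ell_p$ ($1\le p<\infty$) or $\X=c_0$ equipped with its unit vector basis. For (2)$\Rightarrow$(1): if $Y\simeq\X$ is infinite dimensional with $T_{|Y}$ an isomorphism and $d(Y,TY)>0$, then Proposition~\ref{distanceprop} shows $(T-\lambda I)_{|Y}$ is an isomorphism for every $\LCN$; an operator bounded below on an infinite dimensional subspace cannot be compact, so $T-\lambda I$ is noncompact for every $\lambda$.

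For (1)$\Rightarrow$(2) the goal is to produce a normalized block basic sequence $(y_n)$ of $\X$ and disjoint consecutive intervals $(I_n)$ of integers so that, up to an arbitrarily small perturbation, both $y_n$ and $Ty_n$ are supported on $I_n$, and so that $\inf_n d(Ty_n,\mathbb{C}y_n)>0$. Granting this, set $Y=\overline{\operatorname{span}}\{y_n\}$: as a block basis of $\ell_p$ or $c_0$, $(y_n)$ is equivalent to the unit vector basis, so $Y\simeq\X$ and $Y$ is complemented. For $y=\sum a_n y_n$ one has $Ty=\sum a_n Ty_n$, with the summands living on the disjoint blocks $I_n$ where the only trace of $Y$ is $\mathbb{C}y_n$; the $\ell_p$- (or, with the obvious sup-modification, $c_0$-) additivity of disjointly supported vectors then gives $d(Ty,Y)\gtrsim\big(\sum_n |a_n|^p\,d(Ty_n,\mathbb{C}y_n)^p\big)^{1/p}\gtrsim\|y\|$. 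This simultaneously yields $d(Y,TY)>0$ and, since $\|Ty\|\ge d(Ty,Y)$, that $T_{|Y}$ is an isomorphism. So everything reduces to constructing such a sequence.

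The construction splits according to the left essential spectrum $\les$, which is nonvoid and compact. If $\les$ contains two distinct points $\lambda_1\ne\lambda_2$, then for each $i$ the definition of the left essential spectrum provides, inside every tail subspace $\overline{\operatorname{span}}\{e_k:k>N\}$, unit vectors almost annihilated by $T-\lambda_i I$; interleaving and perturbing I obtain disjointly supported normalized blocks $(x_n),(x_n')$ with $\|(T-\lambda_1)x_n\|,\|(T-\lambda_2)x_n'\|\to 0$. Setting $y_n=(x_n+x_n')/\|x_n+x_n'\|$, the vector $Ty_n$ is, up to a small error, the block $(\lambda_1 x_n+\lambda_2 x_n')/\|x_n+x_n'\|$, which (because $\lambda_1\ne\lambda_2$) is uniformly bounded away from the line $\mathbb{C}y_n$; this gives both the separation and the block structure with no gliding hump needed. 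If instead $\les=\{\lambda_0\}$, I use that $T-\lambda_0 I$ is noncompact, hence (strictly singular operators being compact on these spaces) not strictly singular, to extract a normalized block basis $(w_n)$ with $\|(T-\lambda_0)w_n\|\ge c>0$. Such $(w_n)$ cannot be approximate eigenvectors: if $\|(T-\lambda_0)w_n-\mu_n w_n\|\to 0$ along a subsequence, the $(w_n)$ would be approximate eigenvectors of $T$ for the bounded scalars $\lambda_0+\mu_n$, a subsequential limit of which lies in $\les=\{\lambda_0\}$ (a disjoint block sequence sits, via a finite linear-algebra selection, inside any finite codimensional subspace), forcing $\mu_n\to 0$ and hence $\|(T-\lambda_0)w_n\|\to 0$, a contradiction. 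Thus $\inf_n d(Tw_n,\mathbb{C}w_n)=\inf_n d((T-\lambda_0)w_n,\mathbb{C}w_n)>0$, which is the required per-vector bound.

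The technical heart, and the step I expect to be hardest, is the single point case: turning the per-vector bound into the block structure of the second paragraph, i.e. arranging that $w_n$ and $(T-\lambda_0)w_n$ are simultaneously supported (up to small error) on disjoint escaping intervals. The only obstruction is mass of $(T-\lambda_0)w_n$ on low coordinates, so I need the heads $P_{[1,M]}(T-\lambda_0)w_n$ to tend to $0$ for each fixed $M$. For $1<p<\infty$ and $c_0$ a normalized block basis is weakly null, so $(T-\lambda_0)w_n$ is weakly null and this is automatic; the delicate case is $\ell_1$, where no normalized vector is weakly null. There I would pass to normalized differences $u_k=\tfrac12(w_{n_{2k}}-w_{n_{2k-1}})$: a diagonal argument makes the heads of $(T-\lambda_0)u_k$ vanish while disjointness keeps $\|(T-\lambda_0)u_k\|\ge c$, and the non-approximate-eigenvector bound applies verbatim to this new block sequence. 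With vanishing heads in hand the inductive gliding hump runs uniformly across all the spaces, and the per-block estimate completes the proof.
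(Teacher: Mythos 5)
Your proof follows a genuinely different route from the paper's. The paper picks $\lambda\in\sigma_{l.e.}(T)$, uses the definition of the left essential spectrum to build a block subspace $Z$ on which $T_1=T-\lambda I$ is compact, takes a projection $I-P$ onto $Z$, and then splits into cases according to whether the corner $(I-P)T_1P$ is compact, using the rotation $S=\frac{1}{\sqrt2}\bigl(\begin{smallmatrix} I&I\\ I&-I\end{smallmatrix}\bigr)$ to push the non-compactness of the diagonal corner $PT_1P$ into an off-diagonal corner; this uniform argument needs no case analysis on the size of $\sigma_{l.e.}(T)$ and no gliding hump, so $\ell_1$ causes it no special trouble. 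Your reduction of (2) to a block structure with per-vector bounds $\inf_n d(Ty_n,\mathbb{C}y_n)>0$ is correct, as are the two-point case, the non-approximate-eigenvector argument (the finite linear-algebra selection inside a finite codimensional subspace works), and the case $1<p<\infty$ or $c_0$ via weak nullity.

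There is, however, a genuine gap at exactly the step you identify as the technical heart. In the $\ell_1$ case, the claim that ``disjointness keeps $\|(T-\lambda_0)u_k\|\ge c$'' for the differences $u_k=\tfrac12(w_{n_{2k}}-w_{n_{2k-1}})$ is false as a general principle: disjointness of the supports of the $w_n$ says nothing about their images. The per-vector bound $\|(T-\lambda_0)w_n\|\ge c$ is perfectly compatible with $(T-\lambda_0)w_n$ converging in norm to a single nonzero vector $v$ (an operator acting like $x\mapsto \bigl(\sum_n a_n\bigr)v$ on the span of the $w_n$, $x=\sum_n a_nw_n$, does exactly this), and in that case every difference $(T-\lambda_0)(w_m-w_n)$ is small, so your sequence $(u_k)$ loses the lower bound and the construction collapses; a Ramsey argument shows this norm-convergence scenario is in fact the \emph{only} obstruction. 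The repair is available from your own construction but must be invoked explicitly: since $T-\lambda_0$ is not strictly singular, it is bounded below on an infinite dimensional subspace $M$, and the standard small-perturbation argument lets you choose $(w_n)$ to be a block basis whose \emph{closed linear span} still has $T-\lambda_0$ bounded below on it by some $c>0$. Then $\|(T-\lambda_0)u_k\|\ge c\|u_k\|=c$, where disjointness now enters only to compute $\|u_k\|=1$. With this strengthening carried along, the diagonal (coordinatewise limit) argument for vanishing heads, the non-approximate-eigenvector bound for $(u_k)$, and the gliding hump all go through, so your proof becomes complete; but as written, the $\ell_1$ step does not follow from what you established about $(w_n)$.
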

\begin{proof} 
\textbf{(2)} $\Longrightarrow$ \textbf{(1)}\\*
Assume that $T = \lambda I + K$ for some $\LCN$ and some $K\in\comp{\X}$. Clearly 
$\lambda\neq 0$ since $T_{|Y}$ is an isomorphism. Now there exists a sequence $\seq{x}\subset S_Y$ such that $\|Kx_n\|\to 0$ as $n\to\infty$. Let $\displaystyle y_n = T\left ( \frac{x_n}{\lambda}\right )$ and note that
$$
\|x_n - y_n\|  = \left \|x_n - (\lambda I + K)\left (\frac{x_n}{\lambda}\right )\right \| = 
\left \|x_n - x_n - K\left( \frac{x_n}{\lambda}\right )\right \| = \frac{\|Kx_n\|}{\lambda}\to 0
$$
as $n\to\infty$ which contradicts the assumption $d(Y, T(Y))>0$. Thus $T - \lambda I$ is not a compact operator for any $\LCN$.\\*
\textbf{(1)} $\Longrightarrow$ \textbf{(2)}\\*
The proof in this direction follows the ideas of the proof of  Lemma 4.1 from \cite{Apostol_lp}.
Let $\lambda\in\les$. Then $T_1 = T - \lambda I$ is not a compact operator and 
$0\in\sigma_{l.e}(T_1)$. Using just the definition of the left essential spectrum, we find a normalized block basis sequence $\seq{x}$ of the standard unit vector basis of $\X$ such that  $\displaystyle \|T_1x_n\|<\frac{1}{2^n}$ for $n = 1,2,\ldots$. Thus if we denote
$Z = \overline{\textrm{span}}\{x_i\,:\, i = 1,2,\ldots\}$ we have $Z\simeq\X$ and 
${T_1}_{|Z}$ is a compact operator. Let $I-P$ be a bounded projection from $\X$ onto $Z$ (\cite[Lemma 1]{Pelcz_pojections}) so that $T_1(I-P)$ is compact. Now consider the operator $T_2 = (I-P)T_1P$. We have two possibilities:\\*
\textbf{Case I.} Assume that $T_2 = (I-P)T_1P$ is not a compact operator.
Then there exists an infinite dimensional subspace $Y_1\subset P\X$ on which $T_2$ is an isomorphism and hence using \cite[Lemma 2]{Pelcz_pojections} if necessary, we find a complemented subspace 
$Y\subset P\X$, such that $T_2$ is an isomorphism on $Y$. By the construction of the operator $T_2$ we immediately have $d(Y,(I-P)T_1P(Y))>0$ and hence $d(Y,T_1(Y))>0$.
Note that since $\X$ is prime and $Y$ is complemented in $\X$, $Y\simeq\X$ is automatic. Now we are in position to use Proposition \ref{distanceprop} to conclude that $d(Y,T(Y))>0$.\\*
\textbf{Case II.} Now we can assume that the operator $(I-P)T_1P$ is compact. Since
$T_1(I-P)$ is compact and using
$$
T_1 = T_1(I-P) + (I-P)T_1P + PT_1P
$$
we conclude that the operator $PT_1P$ is not compact. 
Using $\X \equiv P\X\oplus(I-P)\X$, we identify $P\X\oplus(I-P)\X$ with $\X\oplus\X$ via an isomorphism $U$, such that $U$ maps $P\X$ onto the first copy of $\X$ in the sum $\X\oplus\X$. Without loss of generality we assume that $T_1 = \bigl( \begin{smallmatrix}
T_{11}&T_{12}\\ T_{21}&T_{22}
\end{smallmatrix} \bigr)$ is acting on  $\X\oplus\X$. Denote by $P = \bigl( \begin{smallmatrix}
I&0\\ 0&0
\end{smallmatrix} \bigr)$ the projection from $\X\oplus\X$ onto the first copy of $\X$. In the new settings, we have that $T_{11}$ is not compact and $T_{21}, T_{22}$ and $T_{12}$ are compact operators.
Define  the operator $S$ on $\X\oplus\X$ in the following way:
$$
\sqrt{2}S = \left( \begin{array}{cr}
I & I  \\
I & -I 
\end{array} \right).
$$
Clearly $S^2 = I$ hence $S = S^{-1}$. Now consider the operator
$2(I-P)S^{-1}T_1SP$. A simple calculation shows that 
$$
2(I-P)S^{-1}T_1SP = \left( \begin{array}{cc}
0 & 0  \\
T_{11}+ T_{12} -  T_{21} - T_{22}& 0 
\end{array} \right)
$$
hence $(I-P)S^{-1}T_1SP$ is not compact.
Now we can continue as in the previous case to conclude that there exists a complemented subspace $Y\subset\X$ in the first copy of $\X\oplus\X$ for which  $d(Y,S^{-1}T_1S(Y))>0$ and hence  $d(SY,T_1(SY))>0$.
Again using Proposition \ref{distanceprop}, we  conclude that $d(SY,T(SY))>0$.
\end{proof}
\begin{remark}
We should note that the two conditions in the preceding lemma are equivalent to a third one, which is the same as \textbf{(2)} plus the additional condition that $Y\oplus T(Y)$ is complemented in $\X$. This is essentially  what was used for proving the complete classification of the commutators on $\ell_1$ in \cite{Dosev}, and 
$\ell_p$, $1<p<\infty$, and $c_0$ in \cite{Apostol_lp} and \cite{Apostol_c0}. The last mentioned condition will also play an important role in the proof of the complete classification of the commutators on $\linf$, but we should point out that once we have an infinite dimensional subspace $Y\subset\linf$ such that $Y\simeq \linf$, $T_{|Y}$ is an isomorphism and  $d(Y, T(Y))>0$, then $Y$ and $Y\oplus T(Y)$ will be automatically complemented in 
$\linf$.
\end{remark}

\begin{lemma}\label{basiclemma}
Let $T\in\oplinf$ and denote by $I$ the identity operator on $\linf$. Then the following are equivalent\\
(a) For each subspace $X\subset\linf, X\simeq c_0 $, there exists a constant $\lambda_X$ and a compact operator $K_X : X\to\linf$ depending on $X$ 
such that  $T_{|X} = \Trest{X}$.\\
(b) There exists a constant $\lambda$ such that $T = \lambda I + S$, where $S\in\sso$ .
\end{lemma}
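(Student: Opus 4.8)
The plan is to prove the two implications separately, with \textbf{(b)}$\Rightarrow$\textbf{(a)} routine and \textbf{(a)}$\Rightarrow$\textbf{(b)} carrying all the weight. Throughout I will use two standing facts about $\linf=C(\beta\N)$: that on $\linf$ the strictly singular operators $\sso$ coincide with the weakly compact ones (cited above), and that every normalized sequence equivalent to the unit vector basis of $c_0$ is automatically weakly null in $\linf$ (a functional $f\in(\linf)^*$ restricts to an element of $(c_0)^*=\ell_1$ on such a copy, so $f(x_n)\to 0$). The latter is what lets compact operators be killed in the limit.

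\textbf{(b)$\Rightarrow$(a).} Here I would simply take $\lambda_X=\lambda$ for every $X$. Given $X\simeq c_0$, the restriction $S_{|X}\colon X\to\linf$ of the strictly singular operator $S=T-\lambda I$ is again strictly singular, and a strictly singular operator whose domain is isomorphic to $c_0$ is compact (otherwise a Bessaga--Pe\l czy\'nski selection produces a normalized block basis spanning a copy of $c_0$ on which $S$ is an isomorphism). Thus $T_{|X}=\lambda I_{|X}+S_{|X}$ with $S_{|X}$ compact, which is \textbf{(a)}.

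\textbf{(a)$\Rightarrow$(b).} The first and main task is to show the scalars $\lambda_X$ can be taken to be a single constant $\lambda$. If $W\subset X$ are both copies of $c_0$, comparing the two representations on $W$ gives $(\lambda_X-\lambda_W)I_{|W}=K_W-(K_X)_{|W}$, which is compact on the infinite-dimensional $W$, forcing $\lambda_X=\lambda_W$; so the assignment is constant along inclusions. For arbitrary copies $X_1,X_2\simeq c_0$ with normalized $c_0$-bases $(x^1_n),(x^2_n)$ I would build a \emph{diagonal} copy of $c_0$: after a Bessaga--Pe\l czy\'nski-type selection (passing to a common subsequence of indices) arrange that the combined sequence $\{x^1_n\}\cup\{x^2_n\}$ is equivalent to the unit vector basis of $c_0$, and set $z_n=\tfrac12(x^1_n+x^2_n)$, so that $Z=\overline{\textrm{span}}\{z_n\}\simeq c_0$. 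Applying \textbf{(a)} to each $X_i$ and to $Z$, and using that $(x^i_n)$ and $(z_n)$ are weakly null so the compact parts vanish in the limit, one computes
$$
Tz_n-\lambda_Z z_n=\tfrac12\big((\lambda_{X_1}-\lambda_Z)x^1_n+(\lambda_{X_2}-\lambda_Z)x^2_n\big)+o(1)\longrightarrow 0 .
$$
Since the combined sequence is $c_0$-basic, the bracketed vectors are bounded below away from $0$ unless both coefficients vanish; hence $\lambda_{X_1}=\lambda_Z=\lambda_{X_2}$, and all $\lambda_X$ equal a common $\lambda$.

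\textbf{Conclusion and the hard point.} With the common value $\lambda$ in hand, for every $X\simeq c_0$ the operator $(T-\lambda I)_{|X}=K_X$ is compact, hence not bounded below on the infinite-dimensional $X$; therefore $T-\lambda I$ fixes no copy of $c_0$. By Pe\l czy\'nski's property (V) for $C(K)$-spaces (equivalently, by Rosenthal's theorem, since an operator on $\linf$ that is not weakly compact fixes a copy of $\linf\supset c_0$), $T-\lambda I$ is weakly compact, and weakly compact operators on $\linf$ are strictly singular. Hence $T=\lambda I+S$ with $S\in\sso$, which is \textbf{(b)}. The main obstacle is the diagonal construction: the selection must be carried out entirely inside $\linf=C(\beta\N)$, where $c_0$-basic sequences are coordinatewise null but need not be tail-supported, so producing a genuinely $c_0$-basic combined sequence — and with it the lower bound that makes the two scalars independent — requires a careful gliding-hump/subsequence argument rather than a naive disjointification.
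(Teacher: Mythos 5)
Your implication \textbf{(b)}$\Rightarrow$\textbf{(a)} and your concluding step ($T-\lambda I$ fixes no copy of $c_0$, hence is weakly compact by Pe\l czy\'nski's property (V), hence strictly singular on $\linf$) are both correct, and they are essentially the paper's route (the paper instead quotes Rosenthal to produce a copy of $\linf$ on which $T-\lambda I$ would be an isomorphism; the two are interchangeable here). The genuine gap is in the heart of \textbf{(a)}$\Rightarrow$\textbf{(b)}: your claim that for arbitrary copies $X_1,X_2$ of $c_0$ one can pass to a common subsequence of indices so that the combined system $\{x^1_n\}\cup\{x^2_n\}$ becomes equivalent to the unit vector basis of $c_0$. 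This is false whenever the two copies are asymptotically close. Take $x^1_n=e_{2n}$ and $x^2_n=e_{2n}+\frac{1}{n}e_{2n+1}$ inside $c_0\subset\linf$: both are normalized $c_0$-bases, $X_1\cap X_2=\{0\}$, but for any common subsequence $(n_k)$ the combination $x^1_{n_k}-x^2_{n_k}=-\frac{1}{n_k}e_{2n_k+1}$ has norm tending to $0$ while its coefficients are $\pm 1$, so no common subsequence of the combined system is equivalent to the $c_0$ basis; the same obstruction occurs trivially when $X_1\cap X_2$ is infinite dimensional (e.g. $X_1=X_2$). In exactly this regime your concluding inequality (``the bracketed vectors are bounded below away from $0$ unless both coefficients vanish'') fails, and the diagonal computation yields only the averaged identity $\lambda_{X_1}+\lambda_{X_2}=2\lambda_Z$, which does not separate the two scalars. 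So this is not, as you suggest, a technicality to be repaired by a more careful gliding hump: the selection you need simply does not exist in these cases, and a different argument must be supplied.

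The paper supplies it by a case analysis on $d(X_1,X_2)$ and $X_1\cap X_2$, which is exactly what your proposal is missing. If $X_1\cap X_2=\{0\}$ and $d(X_1,X_2)>0$, then $X_1+X_2$ is closed, so the full interleaved basis is automatically equivalent to the $c_0$ basis with no selection at all; one applies \textbf{(a)} to $Z=X_1+X_2$ and invokes your own ``constant along inclusions'' observation twice (your diagonal would also work here). If $d(X_1,X_2)=0$, one instead chooses normalized block bases $(u_i)\subset X_1$ and $(v_i)\subset X_2$ with $\|u_i-v_i\|<\frac{1}{i}$; then $Tu_i-Tv_i\to 0$, the compact terms $K_{X_1}u_i$ and $K_{X_2}v_i$ vanish in norm because block bases of $c_0$-bases are weakly null, and one is left with $\|(\lambda_{X_1}-\lambda_{X_2})v_i\|\to 0$, i.e. $\lambda_{X_1}=\lambda_{X_2}$. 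Note the irony: the closeness that destroys your construction is precisely the engine of this elementary perturbation argument. An infinite-dimensional intersection is handled by comparing the two representations on it (equivalently, by your inclusion argument applied to a copy of $c_0$ inside the intersection), and a finite-dimensional intersection is split off to reduce to the previous cases. With this case analysis inserted in place of the diagonal construction, the rest of your proof goes through.
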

\begin{proof}
Clearly (b) implies (a), since every strictly singular operator from $c_0$ to any Banach space is compact (\cite[Theorem 2.4.10]{Kalton}).
For proving the other direction we will first show that for every two subspaces $X,Y$ such that $X\simeq Y\simeq c_0$ we have $\lambda_X = \lambda_Y$. We have several cases.

{\bf{Case I.}} $X\cap Y = \{0\} ,\,\, d(X,Y)>0$.\\
Let $\seq{x}$ and $\seq{y}$ be bases for $X$ and $Y$, respectively, which are equivalent
to the usual unit vector basis of $c_0$. Consider the sequence $\seq{z}$ such that 
$z_{2i} = x_i\,,\, z_{2i-1} = y_i$ for $i=1,2,\ldots$. If we denote $Z = \overline{\textrm{span}}\{z_i\,:\, i = 1,2,\ldots\}$, then clearly $Z\simeq c_0$,
and, using the assumption of the lemma, we have that $T_{|Z} = \Trest{Z}$. Now using $X\subset Z$ we have that 
$\Trest{X}=(\Trest{Z})_{|X}$, hence  
$$
(\lambda_X - \lambda_Z)I_{|X} = (K_Z)_{|X} - K_X.
$$
The last equation is only possible if $\lambda_X = \lambda_Z$ since the identity is never a compact operator on a infinite dimensional subspace.
Similarly $\lambda_Y = \lambda_Z$ and hence $\lambda_X = \lambda_Y$.

{\bf{Case II.}} $X\cap Y = \{0\} ,\,\, d(X,Y)=0$.\\
Again let $\seq{x}$ and $\seq{y}$ be bases of $X$ and $Y$, respectively, which are equivalent
to the usual unit vector basis of $c_0$ and assume also that $\lambda_X\neq\lambda_Y$.
There exists a normalized block basis $\seq{u}$ of $\seq{x}$ and a normalized block basis $\seq{v}$ of $\seq{y}$ 
such that $\displaystyle \|u_i - v_i\|< \frac{1}{i}$.
Then $\|u_i - v_i\|\to 0\Rightarrow \|Tu_i - Tv_i\|\to 0\Rightarrow \|\lambda_Xu_i + K_Xu_i - \lambda_Yv_i - K_Yv_i\|\to 0$.
Since $u_i\to 0$ weakly (as a bounded block basis of the standard unit vector basis of $c_0$) we have $\|K_Xu_i\|\to 0$ and using $\|u_i - v_i\|\to 0$
we conclude that 
$$ 
\|(\lambda_X - \lambda_Y)v_i - K_Yv_i\|\to 0.
$$
Then there exists $N\in \N$ such that $\|K_Yv_i\|>\frac{|\lambda_X - \lambda_Y|}{2}\|v_i\|$ for $i>N$, which is impossible because $K_Y$ is a compact operator. 
Thus, in this case we also have $\lambda_X = \lambda_Y$.

{\bf{Case III.}} $X\cap Y = Z\neq \{0\} ,\,\, \dim (Z)=\infty$.\\
In this case we have $(\Trest{X})_{|Z}=(\Trest{Y})_{|Z}$ and, as in the first case, we rewrite the preceding equation in the form
$$
(\lambda_{X}I_{|X} - \lambda_{Y}I_{|Y})_{|Z} = (K_Y - K_X)_{|Z}.
$$
Again, as in {\bf Case I}, the last equation is only possible if $\lambda_X = \lambda_Y$ since the identity is never a compact operator on a infinite dimensional subspace.

{\bf{Case IV.}} $X\cap Y = Z\neq \{0\} ,\,\, \dim (Z)<\infty$.\\
Let $X = Z\bigoplus X_1$ and $Y = Z\bigoplus Y_1$. 
Then $X_1\cap Y_1 = \{0\}, \,X_1\simeq Y_1\simeq c_0$ and we can reduce to one of the previous cases. 

Let us denote $S = T - \lambda I$ where $\lambda = \lambda_X$ for arbitrary $X\subset\linf,\, X\simeq c_0$.
If  $S$ is not a strictly singular operator, then there is a subspace $Z\subset \linf,\, Z\simeq\linf$ such that
$S_{|Z}$ is an isomorphism (\cite[Corollary 1.4]{Rosenthal_linfty}), hence we can find $Z_1\subset Z\subset\linf,\,Z_1\simeq c_0$, such that $S_{|Z_1}$ is an isomorphism.
This contradicts the assumption that $S_{|Z_1}$ is a compact operator.
\end{proof}

The following corollary is an immediate consequence of Lemma \ref{basiclemma}.

\begin{corollary}\label{corr1}
Suppose $T\in\oplinf$ is such that $T-\lambda I\notin \sso$ for any $\LCN$. Then there exist a subspace $X\subset\linf,\, X\simeq c_0$ such that 
$(T-\lambda I)_{|X}$ is not a compact operator for any $\LCN$.
\end{corollary}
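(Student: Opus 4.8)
The plan is to argue by contraposition and reduce everything to the equivalence already established in Lemma \ref{basiclemma}. The corollary asserts that if $T$ is not globally of the form $\lambda I + S$ with $S$ strictly singular, then a single $c_0$-subspace witnesses the failure of the ``scalar plus compact'' structure simultaneously for all $\LCN$. Rather than construct such a witnessing subspace directly, I would assume that no such witness exists and deduce that $T$ must after all be of the form $\lambda I + S$ with $S\in\sso$, contradicting the hypothesis.

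Concretely, suppose toward a contradiction that for every subspace $X\subset\linf$ with $X\simeq c_0$ there is a scalar $\lambda$ (allowed to depend on $X$) for which $(T-\lambda I)_{|X}$ is a compact operator. Writing $\lambda_X$ for this scalar and setting $K_X := (T-\lambda_X I)_{|X}\colon X\to\linf$, compactness of $K_X$ means exactly that $T_{|X} = \Trest{X}$. Since $X\simeq c_0$ was arbitrary, this is precisely condition \textbf{(a)} of Lemma \ref{basiclemma}.

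By Lemma \ref{basiclemma}, condition \textbf{(a)} implies condition \textbf{(b)}: there is one scalar $\lambda$ with $T=\lambda I + S$ and $S\in\sso$. Equivalently $T-\lambda I = S\in\sso$, which contradicts the standing hypothesis that $T-\lambda I\notin\sso$ for every $\LCN$. Hence the assumption fails, so there must exist $X\subset\linf$ with $X\simeq c_0$ for which $(T-\lambda I)_{|X}$ is non-compact for every $\LCN$, which is exactly the assertion of the corollary.

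There is essentially no analytic obstacle here, since all the genuine content has been pushed into Lemma \ref{basiclemma}: the cross-subspace consistency of the scalars $\lambda_X$ (Cases I--IV there) and the passage from local compactness on every $c_0$-subspace to global strict singularity of $T-\lambda I$. The only point requiring care is the logical bookkeeping, namely checking that the negation of ``there exists $X\simeq c_0$ with $(T-\lambda I)_{|X}$ non-compact for all $\lambda$'' is ``for every $X\simeq c_0$ there exists $\lambda$ with $(T-\lambda I)_{|X}$ compact,'' and that this latter statement matches hypothesis \textbf{(a)} verbatim once one takes $K_X$ to be the restriction $(T-\lambda_X I)_{|X}$.
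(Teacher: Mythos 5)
Your proof is correct and is exactly the argument the paper intends: the paper states the corollary as an ``immediate consequence'' of Lemma \ref{basiclemma}, and your contrapositive reasoning---observing that the failure of the conclusion is verbatim condition \textbf{(a)}, which by the lemma forces condition \textbf{(b)} and hence contradicts the hypothesis---is precisely that consequence spelled out. Nothing is missing; the logical bookkeeping you flag is the only content, and you handle it correctly.
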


\begin{theorem}\label{c0theorem}
Let $T\in\oplinf$ be such that $T-\lambda I\notin \sso$ for any $\lambda$. Then there exists a subspace
$X\subset\linf$ such that $X\simeq c_0$, $T_{|X}$ is an isomorphism and $d(X, T(X))>0$.
\end{theorem}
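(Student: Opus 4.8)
The plan is to extract a copy of $c_0$ via Corollary~\ref{corr1} and then to split into two scenarios according to whether $T$ pushes that copy away from itself inside $\linf$ or keeps it essentially invariant; in the second scenario the problem is reduced to Lemma~\ref{MovingLemma}. First I would apply Corollary~\ref{corr1} to get a subspace $X_0\subset\linf$ with $X_0\simeq c_0$ such that $(T-\lambda I)_{|X_0}$ is not compact for any $\LCN$. Fix a normalized basis $\{x_n\}$ of $X_0$ equivalent to the unit vector basis of $c_0$; such a basis is weakly null in $\linf$, hence so is $\{Tx_n\}$. Let $q\colon\linf\to\linf/X_0$ be the quotient map, so $\|qz\|=d(z,X_0)$. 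Everything turns on the dichotomy: either $qT_{|X_0}$ is not compact, or it is compact.

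Suppose first that $qT_{|X_0}$ is not compact. Since $X_0\simeq c_0$, a noncompact operator out of $X_0$ fixes a copy of $c_0$: noncompactness yields a seminormalized weakly null block basis of $\{x_n\}$ whose image under $qT$ is seminormalized, and since that image is weakly unconditionally Cauchy (being the image of a $c_0$-basis under a bounded operator), a basic subsequence of it is equivalent to the $c_0$-basis. This produces $U\subset X_0$, $U\simeq c_0$, on which $qT_{|U}$ is bounded below, say $\|qTu\|\geq\delta\|u\|$. Then for $u\in S_U$, using $U\subset X_0$, we get $d(Tu,U)\geq d(Tu,X_0)=\|qTu\|\geq\delta$, so by the equivalence noted at the beginning of the proof of Proposition~\ref{distanceprop}, $T_{|U}$ is an isomorphism and $d(U,TU)>0$. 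Taking $X=U$ finishes this case.

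Now suppose $qT_{|X_0}$ is compact, so $\{qTx_n\}\to 0$ and $T$ maps $X_0$ into $X_0$ up to a compact error; making this precise is the crux. Using that $c_0$ has the approximation property, I would lift the compact operator $qT_{|X_0}$ to a compact operator $V\colon X_0\to\linf$ with $qV=qT_{|X_0}$ (approximate $qT_{|X_0}$ in norm by finite rank operators, lift these rank by rank, and sum a rapidly convergent telescoping series). Then $W:=T_{|X_0}-V$ satisfies $qW=0$, i.e. $W\colon X_0\to X_0$, and $W-\lambda I=(T-\lambda I)_{|X_0}-V$ is noncompact for every $\LCN$ since $V$ is compact. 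As $X_0\simeq c_0$, Lemma~\ref{MovingLemma} applies to $W$ and yields $Y\subset X_0$, $Y\simeq c_0$, with $W_{|Y}$ an isomorphism and $d(Y,WY)>0$; equivalently there is $c>0$ with $d(Wy,Y)\geq c$ for all $y\in S_Y$. Finally I absorb $V$: since $V$ is compact and $Y\simeq c_0$ has a weakly null basis, I pass to a block subspace $X\subset Y$, $X\simeq c_0$, with $\|V_{|X}\|<c/2$, so that for $y\in S_X$, $d(Ty,X)\geq d(Ty,Y)\geq d(Wy,Y)-\|Vy\|\geq c/2>0$. The same equivalence from Proposition~\ref{distanceprop} then gives that $T_{|X}$ is an isomorphism with $d(X,TX)>0$.

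The main obstacle is exactly the non-complementedness of $c_0$ in $\linf$: one cannot simply project $T_{|X_0}$ back to an operator on $X_0$ and quote Lemma~\ref{MovingLemma}. The quotient dichotomy circumvents this, handling the escaping case by hand, while in the non-escaping case the compactness of $qT_{|X_0}$ together with the approximation property of $c_0$ lets me manufacture a genuine operator $W$ on $X_0$ to which the lemma applies.
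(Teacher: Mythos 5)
Your plan has the same skeleton as the paper's proof: get $X_0\simeq c_0$ from Corollary \ref{corr1}, split according to whether the part of $T_{|X_0}$ that escapes from $X_0$ is compact, handle the non-compact case by the fact that a non-compact operator from $c_0$ fixes a copy of $c_0$, and reduce the compact case to Lemma \ref{MovingLemma}. Your Case 1 is correct as written. The paper, however, sets up the dichotomy with a projection instead of a quotient: since $Z=\overline{X_0\oplus T(X_0)}$ is separable and $X_0\simeq c_0$, Sobczyk's theorem provides a bounded projection $P\colon Z\to X_0$, and the cases are ``$(I-P)TP$ non-compact'' and ``$(I-P)TP$ compact''; in the latter case $(PTP-\lambda I)_{|X_0}$ is non-compact for every $\LCN$, so Lemma \ref{MovingLemma} applies directly to $PTP\colon X_0\to X_0$ and nothing ever has to be lifted.

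The genuine gap is the lifting step in your Case 2. The assertion you need --- that the compact operator $qT_{|X_0}\colon X_0\to\linf/X_0$ admits a compact lift $V\colon X_0\to\linf$ with $qV=qT_{|X_0}$ --- is true, but ``approximate by finite-rank operators, lift rank by rank, and sum a telescoping series'' does not prove it. Lifting a finite-rank operator through a quotient map comes with no norm control: writing $G=\sum_i f_i\otimes z_i$ and lifting the vectors $z_i$ individually only bounds the lift by $\sum_i\|f_i\|\,\|z_i\|$, a quantity depending on the representation that is not dominated by any fixed multiple of $\|G\|$; hence the lifted telescoping series need not converge. (Telescoping along the coordinate projections of $X_0\simeq c_0$ instead gives rank-one differences, which do lift with control, but then the lifted series converges in operator norm only if the lifts of the weakly unconditionally Cauchy sequence $(qTx_n)$ can be chosen again weakly unconditionally Cauchy --- which is precisely the original lifting problem.) A uniform constant for lifting finite-rank operators amounts to local splitting of the sequence $0\to X_0\to\linf\to\linf/X_0\to 0$; this does hold, because $X_0^{**}\simeq\ell_\infty$ is injective, but it is a nontrivial ingredient that the approximation property alone does not supply. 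The cheapest repair is Sobczyk's theorem again: $E=\overline{\textrm{span}}\,(qT(X_0))$ is separable, hence so is $Z'=q^{-1}(E)$, so there is a bounded projection $P\colon Z'\to X_0$; then $q$ maps $(I-P)Z'$ isomorphically onto $E$, the inverse is a bounded linear section $\sigma\colon E\to\linf$ of $q$, and $V=\sigma\, qT_{|X_0}$ is the desired compact lift. With this inserted, the rest of your Case 2 (applying Lemma \ref{MovingLemma} to $W=T_{|X_0}-V$ and absorbing $V$ on a tail subspace of $Y$, using the equivalence from Proposition \ref{distanceprop}) is correct --- but once Sobczyk is invoked, the paper's formulation via the projection is the shorter route.
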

\begin{proof}
By Corollary \ref{corr1} we have a subspace $X\subset\linf,\, X\simeq c_0$ such that $(T-\lambda I)_{|X}$ is not a compact operator for any $\lambda$.
Let $Z =  \overline{X\oplus T(X)}$ and let $P$ be a projection from $Z$ onto $X$ (such exists since $Z$ is separable and $X\simeq c_0$). We have two cases:\\*
\textbf{Case I.} The operator $T_1 = (I-P)TP$ is not compact.  Since $T_1$ is a non-compact operator from $X\simeq c_0$ into a Banach space we have that $T_1$ is an isomorphism on some  subspace $Y\subset X$, $Y\simeq c_0$ (\cite[Theorem 2.4.10]{Kalton}). Clearly, from the form of the operator $T_1$  we have  $d(Y, T_1(Y)) = d(Y, (I-P)TP(Y))>0$ and hence 
$d(Y, T(Y))>0$.\\*
\textbf{Case II.} If $(I-P)TP$ is compact and $\LCN$, then 
$(I-P)TP + PTP - \lambda I_{|Z} = TP- \lambda I_{|Z}$ is not compact and hence $PTP- \lambda I_{|Z}$ is not compact. Now for $T_2:=PTP\colon X\to X$  we apply Lemma \ref{MovingLemma} to conclude that there 
exists a subspace $Y\subseteq X$, $Y\simeq c_0$ such that 
$d(Y, PT(Y)) = d(Y, PTP(Y))>0$ and hence $d(Y, T(Y))>0$.
\end{proof}
The following theorem is an analog of Lemma \ref{MovingLemma} for the space $\linf$.
\begin{theorem}\label{maintheorem}
Let $T\in\oplinf$ be such that $T-\lambda I\notin \sso$ for any $\LCN$. Then there exists a subspace
$X\subset\linf$ such that $X\simeq \linf$, $T_{|X}$ is an isomorphism and $d(X, T(X))>0$.
\end{theorem}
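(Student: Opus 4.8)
The plan is to reduce to the $c_0$ case already settled in Theorem \ref{c0theorem} and then ``upgrade'' the resulting $c_0$-subspace to a copy of $\linf$. First I would apply Theorem \ref{c0theorem} to produce $X_0\subset\linf$ with $X_0\simeq c_0$, $T_{|X_0}$ an isomorphism and $d(X_0,T(X_0))>0$; by the reformulation established in the proof of Proposition \ref{distanceprop} this is the same as a uniform lower bound $d(Tx,X_0)\geq c\|x\|$ for all $x\in X_0$ and some $c>0$. Since a copy of $c_0$ in $\linf$ may be perturbed to one spanned by a normalized, finitely and disjointly supported sequence $\{x_n\}$ (a routine gliding-hump argument against the unit vector basis of $\linf$), I would pass to such a basis, keeping the separation constant, say, $c/2$; write $A_n=\supp x_n$ and $A=\bigcup_n A_n$. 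The natural candidate for the desired $\linf$-subspace is then $W$, the set of all weak$^{*}$-convergent sums $\sum_n a_nx_n$ with $(a_n)\in\linf$; because the supports are disjoint this $W$ is isometric to $\linf$ and is precisely the weak$^{*}$-closure of $X_0$, so in particular $W$ is weak$^{*}$-closed and its annihilator inside $\ell_1$ is $\{\varphi\in\ell_1:\varphi(x_n)=0\ \forall n\}$.

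The goal is to show that, after passing to a suitable subsequence of $\{x_n\}$, the operator $T_{|W}$ is an isomorphism and $d(W,T(W))>0$. For the distance it suffices, by duality, to produce for each $w\in S_W$ a functional $\varphi_w\in\ell_1$ with $\|\varphi_w\|\leq 1$, $\varphi_w\perp W$, and $\varphi_w(Tw)\geq c'$ for a fixed $c'>0$; such a $\varphi_w$ certifies $d(Tw,W)\geq c'$. For the finite sections $w_N=\sum_{n\leq N}a_nx_n\in X_0$ these separating functionals are supplied directly by the $c_0$-separation through Hahn--Banach: there is $\psi_n\in B_{\ell_1}$ with $\psi_n\perp X_0$ (equivalently $\psi_n\perp W$) and $\psi_n(Tx_n)\geq c/2$.

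The main obstacle is that $T$ need not be weak$^{*}$-continuous, so the action of $T$ on a genuine $\linf$-sum $w=\sum_n a_nx_n$ is \emph{not} the weak$^{*}$-sum $\sum_n a_nTx_n$, and norm truncation is useless because $\|\sum_{n>N}a_nx_n\|$ need not tend to $0$. Concretely, writing $T^{*}\psi_n=f_n+s_n$ with $f_n\in\ell_1$ and $s_n$ a purely singular (i.e. $c_0$-annihilating) functional, one has $\varphi_w(Tw)=(T^{*}\varphi_w)(w)=f(w)+s(w)$, and it is the singular part $s(w)$, together with the interference $\sum_{n\neq n_0}a_nf_{n_0}(x_n)$, that resists the naive estimate. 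To handle the interference I would exploit the disjointness of supports: since $\sum_n|f_m(x_n)|\leq\|f_m\|_1\leq\|T\|$ and $f_m(x_n)\to 0$ as $n\to\infty$ for each fixed $m$, a diagonal gliding-hump selection of the subsequence arranges $\sum_{n\neq n_0}|f_{n_0}(x_n)|<c/8$ for every $n_0$; choosing $n_0$ with $|a_{n_0}|$ close to $1$ then gives $f_{n_0}(w)\gtrsim c/2-c/8>0$. The remaining singular part must be neutralized by choosing the $\psi_n$ (hence the witnessing functionals) via Rosenthal's disjointification lemma so that the relevant measures are almost disjointly supported and the at-infinity contribution $s(w)$ is uniformly small along the subsequence.

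Finally, the isomorphism (bounded-below) property of $T_{|W}$ follows from the same lower estimate (a lower bound on $d(Tw,W)$, a fortiori on $\|Tw\|$, for all $w\in S_W$), or alternatively one may invoke Rosenthal's theorem \cite[Corollary 1.4]{Rosenthal_linfty}: since $T=T-0\cdot I\notin\sso$, $T$ is automatically an isomorphism on some copy of $\linf$, so that the entire force of the construction can be reserved for the separation $d(W,T(W))>0$. With $X:=W$ (or the sub-$\linf$-span over the chosen subsequence) one obtains the conclusion of Theorem \ref{maintheorem}. I expect the disjointification needed to control simultaneously the interference and the singular part of $T^{*}$ --- that is, the circumvention of the failure of weak$^{*}$-continuity --- to be the main technical difficulty, the isomorphism part being comparatively routine given Rosenthal's dichotomy and the recalled fact (\cite[Theorem 5.5.1]{Kalton}) that on $\linf$ strict singularity coincides with weak compactness.
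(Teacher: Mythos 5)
Your reduction to Theorem \ref{c0theorem} matches the paper's first step, but the very next move --- perturbing the $c_0$-copy $X_0$ to a normalized, \emph{finitely and disjointly supported} sequence ``by a routine gliding-hump argument against the unit vector basis of $\linf$'' --- is false in general, and it is the step on which your whole construction rests. A gliding-hump argument requires the vectors to lie in the closed span of the basis one glides against, i.e.\ in $c_0\subset\linf$, whereas the copy produced by Theorem \ref{c0theorem} can sit anywhere in $\linf$. Concretely, index the coordinates of $\linf$ by pairs $(i,j)$, $i<j$, and set $x_n((i,j))=\delta_{ni}-\delta_{nj}$. Then $\bigl(\sum_n a_nx_n\bigr)((i,j))=a_i-a_j$, so $\{x_n\}$ is $2$-equivalent to the unit vector basis of $c_0$; yet every nonzero vector in $\overline{\textrm{span}}\{x_n\}$ has infinite support, and every normalized vector of this span lies at distance at least $1/4$ from every finitely supported vector of $\linf$. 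Hence no subsequence of such a copy can be perturbed into disjointly supported position, and the separation constant cannot be ``kept.'' This is exactly the obstruction the paper's proof is designed to overcome: instead of a local perturbation it invokes the Lindenstrauss--Rosenthal theorem \cite[Theorem 3]{LR_automorphisms} to extend the isomorphism $Y\oplus TY\to c_0(N_1)\oplus c_0(N_2)$ to an automorphism $S$ of all of $\linf$, which \emph{globally} repositions $Y$ and $T(Y)$ onto disjointly supported canonical copies. That idea is absent from your proposal, and without it (or some substitute) the construction of $W$ does not get off the ground.

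The second half has a matching gap. You assert that Hahn--Banach supplies $\psi_n\in B_{\ell_1}$ with $\psi_n\perp X_0$ and $\psi_n(Tx_n)\geq c/2$. Hahn--Banach produces such norming functionals only in $(\linf)^*$, which is much larger than $\ell_1$; and any $\varphi\in\ell_1$ annihilating $X_0$ automatically annihilates the weak$^*$-closure $W$ of $X_0$, so the existence of the $\psi_n$ you want is equivalent to $d(Tx_n,W)\geq c/2$ --- a statement of the same nature as the one being proved, not a consequence of $d(Tx_n,X_0)\geq c$. The subsequent ``neutralization of the singular part'' by an unspecified disjointification is then precisely the content of \cite[Proposition 1.2]{Rosenthal_linfty}, which is what the paper cites once the automorphism has placed everything in disjointly supported position (after which the separation is free, since the coordinate projection $P_{N_2}$ kills $\linf(M)$); as written, your plan is a promissory note at the crux rather than an argument. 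A minor point: your fallback for the isomorphism property via \cite[Corollary 1.4]{Rosenthal_linfty} is a non sequitur, since the copy of $\linf$ on which $T$ embeds there has nothing to do with $W$; your primary observation --- that the separation estimate itself bounds $\|Tw\|$ below because $0\in W$ --- is the correct one.
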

\begin{proof}
 From Theorem \ref{c0theorem} we have a subspace
$Y\subset\linf$, $Y\simeq c_0$ such that $T_{|Y}$ is an isomorphism and $d(Y, T(Y))>0$. Let $N_k = \{3i+k\colon i = 0,1,\ldots \}$ for $k = 1,2,3$. 
There exists an isomorphism $\overline{S} \colon Y\oplus TY\to c_0(N_1)\oplus c_0(N_2)$ such that $\overline{S} (Y) = c_0(N_1)$ and 
$\overline{S}(TY) = c_0(N_2)$. Note that the space $Y\oplus TY$ is indeed a closed subspace of $\linf$ due to the fact that $d(Y, T(Y))>0$.
Now we use \cite[Theorem 3]{LR_automorphisms} to extend $\overline{S}$ to an automorphism $S$ on $\linf$.
Let $T_1 = STS^{-1}$ and consider the operator $(P_{N_2}T_1)_{|\linf (N_1)} \colon\linf (N_1)\to \linf (N_2)$, where $P_{N_2}$ is the natural 
projection onto $\linf (N_2)$. Since $T_1(c_0(N_1)) = c_0(N_2)$, by \cite[Proposition 1.2]{Rosenthal_linfty} there exists an infinite set  $M\subset N_1$  such that $(P_{N_2}T_1)_{|\linf (M)}$ is an isomorphism. This immediately yields
\begin{equation*}
d(\linf (M), P_{N_2}T_1(\linf (M)))>0
\end{equation*}
and hence
\begin{equation}\label{eql1}
d(\linf (M), T_1(\linf (M)))>0.
\end{equation}
Finally, recall that $T_1 = STS^{-1}$, thus
$$
d(\linf (M), STS^{-1}(\linf (M)))>0
$$
and hence $d(S^{-1}(\linf (M)), TS^{-1}(\linf (M)))>0$.
\end{proof}
\noindent
Finally, we can prove our main result.
\begin{theorem}\label{mainthm}
An operator $T\in\oplinf$ is a commutator if and only if $\,\,T-\lambda I\notin \sso$ for any $\lambda\neq 0$.
\end{theorem}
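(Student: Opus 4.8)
The plan is to prove the theorem by treating its two implications separately and, for the nontrivial direction, by performing a case analysis according to whether $T$ is itself strictly singular.

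For the forward (``only if'') direction I would argue by contraposition, exactly along the lines already sketched in the introduction. Suppose $T=\lambda I+S$ with $\lambda\neq 0$ and $S\in\sso$. Since on $\linf$ the strictly singular operators coincide with the weakly compact operators and hence form a proper closed ideal, the quotient $\oplinf/\sso$ is a unital Banach algebra in which the image of $T$ is $\lambda$ times the identity with $\lambda\neq 0$. By Wintner's theorem the identity, and therefore any nonzero scalar multiple of it, is not a commutator in a unital Banach algebra; hence the image of $T$ is not a commutator in the quotient, so $T$ is not a commutator in $\oplinf$. This direction requires no new work.

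For the reverse (``if'') direction, assume $T-\lambda I\notin\sso$ for every $\lambda\neq 0$. If $T\in\sso$ I would simply invoke the fact recorded in the introduction (Apostol's sufficient condition together with the first author's observation that strictly singular operators satisfy it) that every strictly singular operator on $\linf$ is a commutator, which settles this case. Otherwise $T\notin\sso$, and combined with the hypothesis this yields $T-\lambda I\notin\sso$ for \emph{every} $\LCN$, so the full strength of Theorem \ref{maintheorem} is available: it produces a subspace $X\subset\linf$ with $X\simeq\linf$, $T_{|X}$ an isomorphism and $d(X,T(X))>0$, and by the remark following Lemma \ref{MovingLemma} both $X$ and $X\oplus T(X)$ are then automatically complemented in $\linf$. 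Since $\linf\simeq\big(\sum\linf\big)_\infty$, Theorem \ref{similaritythm} now applies and shows that $T$ is similar to a matrix operator $\left(\begin{smallmatrix} * & L \\ * & * \end{smallmatrix}\right)$ on $\linf\oplus\linf$, where $L$ is the left shift of some decomposition. Theorem \ref{diagcomm} asserts precisely that any such matrix is a commutator, and since similarity by an onto isomorphism preserves being a commutator (the first of the two simple facts noted after Proposition \ref{distanceprop}), $T$ is a commutator.

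The genuine content is carried entirely by the earlier structure theorems, so the one point I would flag as requiring care is the case distinction at $\lambda=0$: the hypothesis only controls $\lambda\neq 0$, whereas Theorem \ref{maintheorem} demands $T-\lambda I\notin\sso$ for all $\lambda$. Separating off the possibility that $T$ is itself strictly singular and disposing of it via the ``strictly singular implies commutator'' result is exactly what bridges this gap; everything else is a direct assembly of Theorems \ref{maintheorem}, \ref{similaritythm}, and \ref{diagcomm}.
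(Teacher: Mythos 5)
Your proposal is correct and follows essentially the same route as the paper: Wintner's theorem (via the quotient algebra $\oplinf/\sso$) for the ``only if'' direction, and for the converse the same case split between $T\in\sso$ (handled by the strictly-singular-implies-commutator result from \cite{Dosev}) and $T-\lambda I\notin\sso$ for all $\LCN$, assembled from Theorems \ref{maintheorem}, \ref{similaritythm}, and \ref{diagcomm} exactly as the paper does. The only cosmetic difference is the justification for complementedness of $X+T(X)$ (the paper notes it is isomorphic to $\linf$ and hence complemented, while you cite the remark after Lemma \ref{MovingLemma}, which says the same thing), so there is nothing to correct.
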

\begin{proof}
Note first that if $T$ is a commutator, from the remarks we made in the introduction it follows that $T-\lambda I$ cannot be strictly singular for any 
$\lambda\neq 0$. For proving the other direction we have to consider two cases:\\
{\bf{Case I.}} If $T\in\sso$ ($\lambda = 0$), the statement of the theorem follows from \cite[Theorem 23]{Dosev}.\\
{\bf{Case II.}} If $\,\,T-\lambda I\notin\sso$ for any $\LCN$, then we apply Theorem \ref{maintheorem} 
to get $X\subset\linf$ such that $X\simeq\linf$, $T_{|X}$ an isomorphism and $d(X,TX)>0$. The subspace
$X+TX$ is isomorphic to $\linf$ and thus is complemented in $\linf$.
Theorem \ref{similaritythm} now yields
that $T$ is similar to an operator of the form $\left( \begin{array}{cc}
* & L  \\
* & * 
\end{array} \right)
$.
Finally, we apply Theorem \ref{diagcomm} to complete the proof.
\end{proof}

\section{Remarks and problems}
We end this note with some comments and questions that arise from our work.\\
First consider the set
$$
\IX = \{T\in\opX\,:\, I_{\X}\,\, \textrm{does not factor through}\,\, T \}.
$$

This set comes naturally from our investigation of the commutators on $\ell_p$ for $1\leq p\leq\infty$. We know (\cite[Theorem 18]{Dosev}, \cite[Theorem 4.8]{Apostol_lp}, \cite[Theorem 2.6]{Apostol_c0}) that the non-commutators on $\ell_p$, $1\leq p<\infty$ and $c_0$ have the form $\lambda I + K$ where $K\in\IX$ and $\lambda\neq 0$, where $\IX=\comp{\ell_p}$ is actually the largest ideal in $\mathcal{L}(\ell_p)$ (\cite{GM}), and, in  this paper we showed (Theorem \ref{mainthm}) that the non-commutators on $\linf$ have the form $\lambda I + S$ where $S\in\IX$ and $\lambda\neq 0$, where $\IX=\sso$. Thus, it is natural to ask the question for which Banach spaces $\X$ is the set $\IX$  the largest ideal in  $\opX$? 
 Let us also mention that in addition to the already mentioned spaces, if $\X = L_p(0,1)$, 
$1\leq p<\infty$,  then $\IX$ is again the largest ideal in $\opX$ (cf. \cite{Enflo_Starbird} for the case $p=1$ and  \cite[Proposition 9.11]{JMST} for $p>1$).

First note that the set $\IX$ is closed under left and right multiplication with operators from $\opX$, so the question whether $\IX$ is an ideal is equivalent to the question whether $\IX$ is closed under addition. Note also that if $\IX$ is an ideal then it is automatically the largest ideal in $\opX$ and hence closed, so the question we will consider is under what conditions we have
\begin{equation}\label{idealprop}
\IX + \IX\subseteq \IX.
\end{equation}
The following proposition  gives a sufficient condition for (\ref{idealprop}) to hold.
\begin{proposition}
Let $\X$ be a Banach space such that for every $T\in\opX$ we have $T\notin\IX$ or $I-T\notin\IX$. Then $\IX$ is the largest (hence closed) ideal in $\opX$.
\end{proposition}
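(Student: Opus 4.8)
The plan is to reduce the proposition to the single statement that $\IX$ is closed under addition. Indeed, as already observed in the paragraph preceding the proposition, $\IX$ is closed under both left and right multiplication by arbitrary operators in $\opX$ (if $I = C(AT)D$ then $I$ factors through $T$ via $CA$ and $D$, and symmetrically on the right), and if $\IX$ is an ideal then it is automatically the largest one, hence closed. So it suffices to verify the inclusion (\ref{idealprop}), namely $\IX + \IX \subseteq \IX$, and for this I would argue by contradiction.

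Suppose $S, T \in \IX$ but $S + T \notin \IX$. Then $I$ factors through $S+T$, so there are operators $A, B \in \opX$ with $I = A(S+T)B = ASB + ATB$. Set $U = ASB$. Since $S \in \IX$ and $\IX$ is closed under left and right multiplication, $U \in \IX$; for the same reason $ATB \in \IX$. But $ATB = I - U$, so both $U$ and $I - U$ lie in $\IX$. This directly contradicts the hypothesis of the proposition applied to the operator $U$, which asserts that we cannot have $U \in \IX$ and $I - U \in \IX$ simultaneously. Hence no such factorization exists, $S + T \in \IX$, and $\IX$ is an ideal --- the largest, and therefore closed.

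The argument is almost entirely formal, so I do not anticipate a serious analytic obstacle; the whole content is the algebraic observation that factoring $I$ through a sum $S + T$ forces a complementary pair $U, I - U$ into $\IX$, which is precisely what the hypothesis forbids. The only points that require a moment's care are fixing the convention that ``$I$ factors through $T$'' means $I = ATB$ for some $A, B \in \opX$, and confirming the two-sided multiplicative stability of $\IX$ used to pass from $S, T \in \IX$ to $ASB, ATB \in \IX$. Both are routine, so I expect the proof to be short, with the hypothesis appearing in hindsight as engineered exactly for this one step.
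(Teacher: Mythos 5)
Your proof is correct and follows essentially the same route as the paper: factor the identity through $S+T$, use the two-sided multiplicative stability of $\IX$ to produce an operator $U\in\IX$ with $I-U\in\IX$, and contradict the hypothesis. In fact your version streamlines the paper's argument, which inserts an intermediate (and logically unnecessary) projection $P=(S+T)UV$ onto the range of $(S+T)U$ before reaching the identical contradiction $VPSU+VPTU=I$.
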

\begin{proof}
Let $S,T\in\IX$ and assume that $S+T\notin\IX$. By our assumption,  there exist two operators $U\colon\X\to\X$ and $V\colon \X\to\X$ which make the following diagram commute:
$$
\begin{diagram}[height=2em,width=4.5em,abut] 
\X& \rTo^{S+T} & \X\\
{U\,}\uTo &  &\dTo > {V} \\
\X& \rTo^{I}& \X
\end{diagram}
$$
Denote $W = (S+T)U(\X)$ and let $P\colon\X\to W$ be a projection onto $W$ (we can take $P = (S+T)UV$). Clearly   $VP(S+T)U = I$. Now $S, T\in\IX$ implies $VPSU, VPST\in\IX$ which is a contradiction since $VPSU + VPTU = I$.
\end{proof}

Let us just mention that the conditions of the proposition above are satisfied for $\X = C([0,1])$ 
(\cite[Proposition 2.1]{LP}) hence $\IX$ is the largest ideal in $\mathcal{L}(C([0,1]))$ as well.

We should point out that there are Banach spaces for which $\IX$ is not an ideal in $\opX$. 
In the space $\ell_p\oplus\ell_q$, $1\leq p < q<\infty$, there are exactly two maximal ideals (\cite{Porta}), namely, the closure of the ideal of the operators that factor through $\ell_p$, which we will denote by $\alpha_p$, and  the closure of the ideal of the operators that factor through $\ell_q$, which we will denote by $\alpha_q$. In this particular space, the first author proved a necessary and sufficient condition for an operator to be a commutator:

\begin{theorem}
(\cite[Theorem 20]{Dosev}) Let $P_{\ell_p}$ and $P_{\ell_q}$ be the natural projections
from $\ell_p\oplus\ell_q$ onto $\ell_p$ and $\ell_q$, respectively. Then $T$ is a commutator if and only of $P_{\ell_p}TP_{\ell_p}$ and $P_{\ell_q}TP_{\ell_q}$ are commutators as operators acting on $\ell_p$ and $\ell_q$ respectively.
\end{theorem}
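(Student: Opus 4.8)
The plan is to write $T\in\mathcal{L}(\ell_p\oplus\ell_q)$ as a $2\times 2$ operator matrix $T=\bigl(\begin{smallmatrix}T_{pp}&T_{pq}\\ T_{qp}&T_{qq}\end{smallmatrix}\bigr)$, where $T_{pp}=P_{\ell_p}TP_{\ell_p}$ is regarded as an operator on $\ell_p$, $T_{qq}=P_{\ell_q}TP_{\ell_q}$ as an operator on $\ell_q$, $T_{pq}\colon\ell_q\to\ell_p$ and $T_{qp}\colon\ell_p\to\ell_q$, and to prove the two implications separately. The one structural fact I would lean on for the forward direction is Pitt's theorem: since $1\le p<q<\infty$, every bounded operator from $\ell_q$ to $\ell_p$ is compact, so the block $T_{pq}$ is automatically compact. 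I would also use freely the classification of commutators on $\ell_r$, $1\le r<\infty$: an operator is a commutator exactly when it is \emph{not} of the form $\mu I+K$ with $\mu\neq0$ and $K$ compact.

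For the forward implication I would build, for each $r\in\{p,q\}$, the map $\phi_r\colon\mathcal{L}(\ell_p\oplus\ell_q)\to\mathcal{L}(\ell_r)/\mathcal{K}(\ell_r)$ sending $S$ to the class of $S_{rr}$ in the Calkin algebra. The point is that $\phi_r$ is a unital \emph{algebra homomorphism}: the only obstruction to multiplicativity of $S\mapsto S_{rr}$ is the cross term $S_{rr'}T_{r'r}$ with $r'\neq r$, and this is compact because one of its two factors is an operator $\ell_q\to\ell_p$, hence compact by Pitt. If $T=[A,B]$ then $\phi_r(T)=[\phi_r(A),\phi_r(B)]$ is a commutator in the unital Calkin algebra, so by Wintner's theorem (\cite{Wintner}) — the identity, and hence any nonzero scalar multiple of it, is not a commutator — we get $\phi_r(T)\neq\mu\cdot 1$ for every $\mu\neq0$. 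Unwinding, $T_{rr}-\mu I\notin\mathcal{K}(\ell_r)$ for all $\mu\neq0$, i.e.\ $T_{rr}$ is not of the form $\mu I+K$ with $\mu\neq0$; by the $\ell_r$ classification $T_{rr}$ is a commutator. Applying this for $r=p$ and $r=q$ finishes this direction.

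For the backward implication, assume $T_{pp}=[A_p,B_p]$ on $\ell_p$ and $T_{qq}=[A_q,B_q]$ on $\ell_q$. I would search for $A,B$ on $\ell_p\oplus\ell_q$ of the form $A=\bigl(\begin{smallmatrix}A_p&0\\ 0&A_q\end{smallmatrix}\bigr)$ and $B=\bigl(\begin{smallmatrix}B_p&C\\ D&B_q\end{smallmatrix}\bigr)$; then the diagonal blocks of $[A,B]$ are exactly $T_{pp}$ and $T_{qq}$, while the off-diagonal blocks are $A_pC-CA_q$ and $A_qD-DA_p$. So it remains to solve the two Sylvester equations $A_pC-CA_q=T_{pq}$ and $A_qD-DA_p=T_{qp}$ for bounded $C\colon\ell_q\to\ell_p$ and $D\colon\ell_p\to\ell_q$. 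The crucial freedom is that replacing $A_p$ by $A_p+\mu I$ leaves $[A_p,B_p]=T_{pp}$ unchanged but translates the spectrum, $\sigma(A_p+\mu I)=\sigma(A_p)+\mu$. Since $\sigma(A_p)$ and $\sigma(A_q)$ are compact, I can pick $\mu$ with $|\mu|$ large so that $\sigma(A_p+\mu I)\cap\sigma(A_q)=\emptyset$; Rosenblum's theorem then gives unique bounded solutions to both Sylvester equations, and the resulting $A,B$ satisfy $[A,B]=T$. (Note that this direction does not need Pitt's theorem at all.)

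I expect the backward direction to be the substantive part, and within it the key idea is the pairing of two observations: that one may shift $A_p$ by a scalar without disturbing the commutator $T_{pp}$, and that the resulting spectral separation is exactly what makes the Sylvester operators invertible via Rosenblum. The forward direction is comparatively routine once one recognizes that Pitt's theorem is precisely what upgrades each corner projection $S\mapsto S_{rr}$ to an honest Calkin-algebra homomorphism, so that Wintner's theorem can be applied to each diagonal block in turn.
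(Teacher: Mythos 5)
Your proof is correct. One structural point to be aware of: the paper itself does not prove this theorem --- it is imported verbatim from \cite[Theorem 20]{Dosev} --- so the right in-paper comparison is Lemma~\ref{remlemma1}, whose proof generalizes exactly this statement. For the forward direction your argument and the paper's are the same mechanism in different clothing: the paper expands $T=[T_1,T_2]$ into blocks, notes that the cross terms $B_1C_2$, $B_2C_1$ factor through the other space and hence (by Pitt's theorem, in the $\ell_p\oplus\ell_q$ case) are compact, and then uses the $\ell_r$ classification to see that a commutator perturbed by a compact operator can never be of the form $\lambda I+K$ with $\lambda\neq 0$; your Calkin-algebra homomorphism $\phi_r$ together with Wintner's theorem packages precisely these two ingredients (Pitt for multiplicativity of the corner map, the classification to pass from ``not $\mu I+K$'' back to ``commutator'') into quotient language. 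The genuinely substantive difference is the backward direction: the paper disposes of it by citing \cite[Lemma 19]{Dosev}, whereas you give a complete, self-contained construction --- take $A=\bigl(\begin{smallmatrix}A_p+\mu I&0\\ 0&A_q\end{smallmatrix}\bigr)$, use that the scalar shift does not disturb $[A_p,B_p]$ but translates the spectrum so that $\sigma(A_p+\mu I)\cap\sigma(A_q)=\emptyset$ once $|\mu|>\|A_p\|+\|A_q\|$, and then solve the two Sylvester equations $(A_p+\mu I)C-CA_q=T_{pq}$ and $A_qD-D(A_p+\mu I)=T_{qp}$ by Rosenblum's theorem to fill in the off-diagonal blocks of $B$. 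This is in substance the proof of the cited lemma, and your remark that this half needs no Pitt and works on an arbitrary sum $X\oplus Y$ matches the generality in which that lemma is stated and used (it is exactly what the paper invokes inside Lemma~\ref{remlemma1}). So your proposal is a sound proof of the theorem; its added value relative to this paper is that it makes explicit the half that the paper leaves entirely to a citation.
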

If we denote $T = \bigl( \begin{smallmatrix}
T_{11}&T_{12}\\ T_{21}&T_{22}
\end{smallmatrix} \bigr)$, the last theorem implies that $T$ is not a commutator if and only if $T_{11}$ or $T_{22}$ is not a commutator as an operator acting on $\ell_p$ or $\ell_q$ respectively. Now using the classification of the commutators on $\ell_p$ for $1\leq p<\infty$ and the results in \cite{Porta}, it is easy to deduce that an operator on $\ell_p\oplus\ell_q$ is not a commutator if and only if it has the form $\lambda I + K$ where
$\lambda\neq 0$ and $K\in\alpha_p\cup\alpha_q$. We can generalize this fact, but first we need a definition and a Lemma that follows easily from \cite[Corollary 21]{Dosev}.

\textbf{Property $\propP$}. We say that a Banach space $\X$ has property $\propP$ if $T\in\opX$ is not a commutator if and only if $T = \lambda I + S$, where $\lambda\neq 0$ and $S$ belongs to some proper ideal of $\opX$.

All the Banach spaces we have considered so far have property $\propP$ and our goal now is to show that property $\propP$ is closed under taking finite sums under certain conditions imposed on the elements of the sum.

\begin{lemma}\label{remlemma1}
Let $\{X_i\}_{i=1}^n$ be a finite sequence of Banach spaces that have property $\propP$. Assume also that all operators $A\colon X_i\to X_i$ that factor through $X_j$ are in the intersection of all maximal ideals in $\mathcal{L}(X_i)$ for each $i,j = 1,2,\ldots ,n,\,\, i\neq j$.
Let $\X = X_1\oplus X_2\oplus\cdots\oplus X_n$  and let $P_i$  be  the natural projections from $\X$ onto $X_i$ for $i = 1,2,\ldots ,n$. Then $T\in\opX$ is a commutator if and only if for each $1\leq i\leq n$, $P_iTP_i$  is a commutator as an operator acting on $X_i$. 

\end{lemma}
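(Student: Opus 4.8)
The plan is to pass to the block-matrix picture: writing $T=(T_{ij})_{i,j=1}^n$ with $T_{ij}=P_iTP_j$ regarded as an operator from $X_j$ to $X_i$, the diagonal entries are exactly the $T_{ii}=P_iTP_i$ appearing in the statement. The two implications are genuinely different in character, so I would treat them separately. The constructive direction --- that the diagonal blocks being commutators forces $T$ to be a commutator --- turns out to use none of the extra hypotheses, while the converse is where property $\propP$ and the factoring assumption enter.

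\emph{Sufficiency.} To show that $T$ is a commutator whenever every $P_iTP_i$ is, I would argue by hand. Write $T_{ii}=A_iB_i-B_iA_i$ on $X_i$. Since $[A_i+c_iI,B_i]=[A_i,B_i]$ for every scalar $c_i$, I am free to replace $A_i$ by $A_i+c_iI$. Put $A=\mathrm{diag}(A_1+c_1I,\dots,A_n+c_nI)$ and look for $B=(B_{ij})$ with prescribed diagonal $B_{ii}=B_i$. Because $A$ is block diagonal, the $(i,j)$ entry of $AB-BA$ is $(A_i+c_iI)B_{ij}-B_{ij}(A_j+c_jI)$, so the diagonal entries reproduce the $T_{ii}$ automatically and each off-diagonal entry reduces to the \emph{independent} Sylvester equation $(A_i+c_iI)B_{ij}-B_{ij}(A_j+c_jI)=T_{ij}$ for $i\neq j$. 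By Rosenblum's theorem each of these has a (unique) solution $B_{ij}\in\mathcal{L}(X_j,X_i)$ as soon as $\sigma(A_i+c_iI)\cap\sigma(A_j+c_jI)=\emptyset$; since the spectra $\sigma(A_i)$ are compact, choosing the scalars $c_i$ pairwise far apart separates all the translated spectra at once, and then $AB-BA=T$.

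\emph{Necessity.} This is the substantive direction, and I would prove the contrapositive: if some $P_iTP_i$ is not a commutator on $X_i$, then $T$ is not a commutator on $\X$. By property $\propP$ we may write $T_{ii}=\lambda I_{X_i}+S_i$ with $\lambda\neq0$ and $S_i$ in a proper, hence in some (closed) maximal, ideal $\mathcal{M}_i$ of $\mathcal{L}(X_i)$. Let $q_i\colon\mathcal{L}(X_i)\to\mathcal{L}(X_i)/\mathcal{M}_i$ be the quotient map and define $\Phi_i(A)=(P_iA)_{|X_i}\in\mathcal{L}(X_i)$; I claim $\pi_i:=q_i\circ\Phi_i$ is a unital algebra homomorphism. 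Unitality is clear, and the defect from multiplicativity is $\Phi_i(AB)-\Phi_i(A)\Phi_i(B)=\sum_{k\neq i}P_iAP_kBP_i$, each summand being an operator $X_i\to X_i$ that factors through $X_k$; by hypothesis these lie in the intersection of all maximal ideals of $\mathcal{L}(X_i)$, in particular in $\mathcal{M}_i$, so $q_i$ annihilates them. Hence $\pi_i(T)=q_i(T_{ii})=\lambda\cdot 1$, a nonzero scalar multiple of the identity in the nonzero unital Banach algebra $\mathcal{L}(X_i)/\mathcal{M}_i$. A unital homomorphism sends commutators to commutators, so if $T$ were a commutator then $\lambda\cdot 1$ would be a commutator in the quotient, contradicting Wintner's theorem as recalled in the introduction; therefore $T$ is not a commutator. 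The general $n$ follows verbatim.

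The step I expect to be the crux is the construction of $\pi_i$: the whole argument hinges on the factoring hypothesis being exactly what is needed to render the cross terms $P_iAP_kBP_i$ negligible modulo $\mathcal{M}_i$, thereby promoting the merely linear and unital map $\Phi_i$ to a genuine homomorphism and letting Wintner's theorem do the work. By contrast, once one recalls that Sylvester equations with disjoint spectra are solvable and that scalar translations of the $A_i$ are harmless, the sufficiency direction is routine, and the separation of finitely many compact spectra by translation is immediate.
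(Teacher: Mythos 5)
Your proof is correct but takes a genuinely different route from the paper's in both directions. For sufficiency (all $P_iTP_i$ commutators $\Rightarrow$ $T$ a commutator) the paper simply cites Lemma 19 of \cite{Dosev}, whereas you give a self-contained argument: translate the $A_i$ by widely separated scalars and solve the decoupled Sylvester equations $(A_i+c_iI)B_{ij}-B_{ij}(A_j+c_jI)=T_{ij}$ via Rosenblum's theorem (valid for operators between Banach spaces; with the $c_i$ far apart a Neumann series would even do, avoiding spectral theory altogether); you also correctly note that this direction needs neither property $\propP$ nor the factoring hypothesis. For necessity the paper works directly and only for $n=2$ (general $n$ by induction): it writes $T=[T_1,T_2]$ in block form, observes that each diagonal block is a commutator plus operators factoring through the other summand --- hence plus an element of the intersection of all maximal ideals --- and then proves a perturbation fact (on a space with property $\propP$, a commutator plus an element of the intersection of all maximal ideals is still a commutator) by contradiction, using both implications of property $\propP$. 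You instead prove the contrapositive: the factoring hypothesis makes the compression map $\Phi_i$ multiplicative modulo the maximal ideal $\mathcal{M}_i$, so $\pi_i=q_i\circ\Phi_i$ is a unital homomorphism into $\mathcal{L}(X_i)/\mathcal{M}_i$ sending the would-be commutator $T$ to $\lambda\cdot 1$, contradicting Wintner's theorem. The ingredients (maximal ideals to absorb the cross terms, property $\propP$, Wintner) are the same, but your packaging treats all $n$ summands at once with no induction, uses only the forward implication of property $\propP$ (non-commutator $\Rightarrow$ $\lambda I + S$), and makes explicit the quotient-homomorphism mechanism that is only implicit in the paper's perturbation lemma; the paper's version, in exchange, isolates that perturbation fact as a reusable statement.
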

\begin{proof}
The proof is by induction and it mimics the proof of \cite[Corollary 21]{Dosev}.
First consider the case $n=2$.
Let 
$T = \left( \begin{array}{cc}
A & B   \\
C & D 
\end{array} \right)
$ where $A:X_1\to X_1, D: X_2\to X_2, B:X_2 \to X_1, C:X_1\to X_2$.
If $T$ is a commutator, then  $T = [T_1,T_2]$ for some $T_1, T_2\in\opX$. Write 
$T_i = \left( \begin{array}{cc}
A_i & B_i   \\
C_i & D_i 
\end{array} \right)
$ for $i = 1,2$. A simple computation shows that  
$$
T = \left( \begin{array}{cc}
[A_1, A_2] + B_1C_2 - B_2C_1 & A_1B_2 + B_1D_2 - A_2B_1 - B_2D_1   \\
C_1A_2 + D_1C_2 - C_2A_1 - D_2C_1 & [D_1, D_2] + C_1B_2 - C_2B_1 
\end{array} \right) .
$$
From the fact that $X_1$ and $X_2$ have property $\propP$, and the fact that the $B_1C_2,\,B_2C_1$ lie in the intersection of all maximal ideals in $\mathcal{L}(X_1)$ and $C_1B_2,\,C_2B_1$ lie in the intersection of all maximal ideals in $\mathcal{L}(X_2)$ we immediately deduce that
the diagonal entries in the last representation of $T$ are commutators. In the preceding argument we used the fact that a perturbation of a commutator on a Banach space $X$ having property $\propP$ by an operator that lies in the intersection of all maximal ideals in $\mathcal{L}(X)$ is still a commutator. To show this fact assume that 
$A\in \mathcal{L}(X)$ is a commutator,  $B\in \mathcal{L}(X)$ lies in the intersection of all maximal ideals in $\mathcal{L}(X)$ and $A+B = \lambda I + S$ where $S$ is an element of some ideal $M$ in $\mathcal{L}(X)$. Now using the simple observation that every ideal is contained in some maximal ideal, we conclude that $S-B$ is contained in a maximal ideal, say $\tilde{M}$ containing $M$ hence $A - \lambda I\in\tilde{M}$, which is a contradiction with the assumption that $X$ has property $\propP$.
 
For the other direction we apply  \cite[Lemma 19]{Dosev} which concludes the proof in the case $n=2$.
The general case follows from the same considerations as in the case $n=2$ in a obvious way.
\end{proof}

Our last corollary shows that property $\propP$ is preserved under taking finite sums of Banach spaces having property $\propP$ and some additional assumptions as in Lemma \ref{remlemma1}. 

\begin{corollary}
Let $\{X_i\}_{i=1}^n$ be a finite sequence of Banach spaces that have property $\propP$. Assume also that all operators $A\colon X_i\to X_i$ that factor through $X_j$ are in the intersection of all maximal ideals in $\mathcal{L}(X_i)$ for each $i,j = 1,2,\ldots ,n,\,\, i\neq j$.
Then $\X = X_1\oplus X_2\oplus\cdots\oplus X_n$ has property $\propP$.
\end{corollary}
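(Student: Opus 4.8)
The plan is to deduce everything from Lemma \ref{remlemma1}, which reduces the commutator question on $\X = X_1 \oplus \cdots \oplus X_n$ to the diagonal compressions $P_i T P_i$, together with property $\propP$ for the summands $X_i$. Throughout I would use, for a fixed index $i$, the compression map $\Phi_i(A) = (P_i A P_i)_{|X_i} \in \mathcal{L}(X_i)$ and the inclusion $\iota_i \colon X_i \hookrightarrow \X$, so that $P_i \iota_i = I_{X_i}$; I write $J_i$ for the intersection of all maximal ideals of $\mathcal{L}(X_i)$, which is a proper ideal and, by the standing hypothesis, contains every operator $X_i \to X_i$ that factors through some $X_j$ with $j \neq i$. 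Note that since $J_i$ is contained in each maximal ideal, it lies inside any maximal ideal we might choose.

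For the easy implication, suppose $T = \lambda I + S$ with $\lambda \neq 0$ and $S$ in a proper ideal $M$ of $\opX$. Since $\sum_i P_i = I \notin M$, some projection $P_{i_0}$ is not in $M$. I would then check that $M_{i_0} = \{A \in \mathcal{L}(X_{i_0}) : \iota_{i_0} A P_{i_0} \in M\}$ is an ideal of $\mathcal{L}(X_{i_0})$, using $P_{i_0}\iota_{i_0} = I_{X_{i_0}}$ to see that $\iota_{i_0}(BAC)P_{i_0} = (\iota_{i_0}BP_{i_0})(\iota_{i_0}AP_{i_0})(\iota_{i_0}CP_{i_0})$, and that it is proper precisely because $P_{i_0} \notin M$. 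As $\Phi_{i_0}(S) \in M_{i_0}$ and $\Phi_{i_0}(T) = \lambda I_{X_{i_0}} + \Phi_{i_0}(S)$, property $\propP$ for $X_{i_0}$ shows $P_{i_0} T P_{i_0}$ is not a commutator, so $T$ is not a commutator by Lemma \ref{remlemma1}. The cross-factoring hypothesis enters this direction only through Lemma \ref{remlemma1}.

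The converse is the substantive half. If $T$ is not a commutator, Lemma \ref{remlemma1} yields an index $i_0$ with $P_{i_0} T P_{i_0}$ not a commutator on $X_{i_0}$, and property $\propP$ then gives $\Phi_{i_0}(T) = \lambda I_{X_{i_0}} + S_{i_0}$ with $\lambda \neq 0$ and $S_{i_0}$ in a proper ideal $N_{i_0}$ of $\mathcal{L}(X_{i_0})$. I would extend $N_{i_0}$ to a maximal ideal $\widetilde N_{i_0}$ (possible by Zorn, since the union of a chain of proper ideals contains no identity and is therefore proper) and set $M = \{A \in \opX : \Phi_{i_0}(A) \in \widetilde N_{i_0}\}$. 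This $M$ is a linear subspace as the preimage of a subspace under the linear map $\Phi_{i_0}$; it is proper because $\Phi_{i_0}(I) = I_{X_{i_0}} \notin \widetilde N_{i_0}$; and $T - \lambda I \in M$ because $\Phi_{i_0}(T - \lambda I) = S_{i_0} \in N_{i_0} \subseteq \widetilde N_{i_0}$. The one remaining point, which is the crux of the whole argument, is that $M$ must be a two-sided ideal.

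Verifying that $M$ is an ideal is where I expect the real obstacle, and it is exactly what the hypothesis is designed to overcome. Expanding by inserting $I = \sum_j P_j$ twice gives $P_{i_0} B A C P_{i_0} = \sum_{j,k} P_{i_0} B P_j A P_k C P_{i_0}$. The diagonal term $(j,k)=(i_0,i_0)$ equals $\Phi_{i_0}(B)\,\Phi_{i_0}(A)\,\Phi_{i_0}(C)$, which lies in $\widetilde N_{i_0}$ whenever $A \in M$; every other term, restricted to $X_{i_0}$, factors as an operator $X_{i_0} \to X_{i_0}$ through some $X_m$ with $m \neq i_0$, so by hypothesis it lies in $J_{i_0} \subseteq \widetilde N_{i_0}$. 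Hence $\Phi_{i_0}(BAC) \in \widetilde N_{i_0}$, that is $BAC \in M$, and specializing $B = I$ or $C = I$ gives the one-sided ideal properties. Without the assumption that cross-factoring operators lie in every maximal ideal, the compression $\Phi_{i_0}$ fails to be multiplicative enough and these mixed terms cannot be absorbed, so this is precisely the step that forces the hypothesis. With $M$ an ideal, we have $T = \lambda I + (T - \lambda I)$ with $\lambda \neq 0$ and $T - \lambda I$ in the proper ideal $M$, which completes the proof that $\X$ has property $\propP$.
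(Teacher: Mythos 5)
Your proof is correct and takes essentially the same route as the paper: Lemma \ref{remlemma1} reduces matters to a diagonal compression $P_{i_0}TP_{i_0}$ that is not a commutator, property $\propP$ of $X_{i_0}$ together with extension to a maximal ideal $\widetilde N_{i_0}$ gives the ideal, and $M=\{A\in\opX : P_{i_0}AP_{i_0}\in \widetilde N_{i_0}\}$ is verified to be a proper ideal using the cross-factoring hypothesis, your $\sum_{j,k}P_{i_0}BP_jAP_kCP_{i_0}$ expansion simply making explicit what the paper dismisses as ``clearly.'' The only (harmless) divergence is the easy direction, where the paper invokes its introductory Wintner-type remark that $\lambda I+S$ is never a commutator for $S$ in a proper ideal, while you re-derive this by pushing the ideal down to a summand and applying Lemma \ref{remlemma1} and property $\propP$ again.
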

\begin{proof}
Assume that $T\in\opX$ is not a commutator. Using Lemma \ref{remlemma1}, this can happen if and only if $P_iTP_i$ is not commutator on $X_i$ for some $i\in \{1,2,\ldots ,n\}$ and without loss of generality assume that $i=1$.
Since $P_1TP_1$ is not a commutator and $X_1$ has property $\propP$ then $P_1TP_1 = \lambda I_{X_1} + S$ where $S$ belongs to some maximal ideal $J$ of $\mathcal{L}(X_1)$. Consider 

\begin{equation}
M = \{T\in\opX\,:\, P_1TP_1 \in J \}.
\end{equation}

Clearly, if $T\in M$ and $A\in\opX$, then $AT, TA\in M$ because of the assumption on the operators from $X_1$ to $X_1$ that factor through $X_j$. It is also obvious that $M$ is closed under addition, hence $M$ is an ideal.
Now it is easy to see that $T-\lambda I\in M$ which shows that all non-commutators have the form  $\lambda I + S$, where $\lambda\neq 0$ and $S$ belongs to some proper ideal of $\opX$.

The other direction follows from our comment in the beginning of the introduction that no operator of the form
$\lambda I + S$ can be a commutator for any $\lambda \neq 0$ and any operator $S$ which lies in a proper ideal of $\opX$.

\end{proof}

\nocite{*}

\noindent
   Detelin Dosev\\
   Department of Mathematics\\
   Texas A\&M University\\
   College Station, Texas 77843\\
   USA\\
   \textbf{dossev@math.tamu.edu}

\noindent
   William B. Johnson\\
   Department of Mathematics\\
   Texas A\&M University\\
   College Station, Texas 77843\\
   USA\\
   \textbf{johnson@math.tamu.edu}
\end{document}